\renewcommand{\H}{\mathbb H}
\newcommand{\leftexp}[2]{{\vphantom{#2}}^{#1}%
      \kern-\scriptspace%
      {#2}}
\newcommand{\E}{{\mathcal E}}
\newcommand{\Q}{{\mathbb Q}}
\newcommand{\Z}{{\mathbb Z}}
\newcommand{\R}{{\mathbb R}}
\newcommand{\C}{{\mathbb C}}
\newcommand{\bs}{\backslash}
\newcommand{\U}{\mathcal U}
\newcommand{\SL}{{\rm SL}}
\newcommand{\SO}{{\rm SO}}
\newcommand{\Sp}{{\rm Sp}}
\newcommand{\Hom}{{\rm Hom}}
\newcommand{\vl}{{\rm vol}}
\renewcommand{\AA}{{\mathcal A}}
\newcommand{\HH}{{\mathbb H}}
\newcommand{\mat}[4]{{\setlength{\arraycolsep}{0.5mm}\left[\begin{array}{cc}#1&#2\\#3&#4\end{array}\right]}}
\newtheorem{lemma}{Lemma}[section]
\newtheorem{theorem}[lemma]{Theorem}
\newtheorem{corollary}[lemma]{Corollary}
\newtheorem{proposition}[lemma]{Proposition}
\theoremstyle{remark}
\newtheorem{remark}[lemma]{Remark}
\begin{document}

\bibliographystyle{plain}

\title[Representations and nearly holomorphic  forms]{Representations of $\SL_2(\R)$ and nearly holomorphic modular forms}

\thanks{A.S.\ is partially supported by EPSRC grant EP/L025515/1. A.P.\ and R.S.\ are supported by NSF grant DMS--$1100541$.}

\author{Ameya Pitale}
\address{Department of Mathematics
\\ University of Oklahoma\\ Norman\\
   OK 73019, USA}
\email{apitale@math.ou.edu}

\author{Abhishek Saha}
\address{Departments of Mathematics \\
  University of Bristol\\
  Bristol BS81SN \\
  UK} \email{abhishek.saha@bris.ac.uk}

\author{Ralf Schmidt}
\address{Department of Mathematics
\\ University of Oklahoma\\ Norman\\
   OK 73019, USA}
\email{rschmidt@math.ou.edu}

\begin{abstract}
In this semi-expository note, we give a new proof of a structure theorem due to Shimura for nearly holomorphic modular forms on the complex upper half plane.  Roughly speaking, the theorem says that the space of all nearly holomorphic modular forms is the direct sum of the subspaces obtained by applying appropriate weight-raising operators on the spaces of holomorphic modular forms and on the one-dimensional space spanned by the weight 2 nearly holomorphic Eisenstein series.

While Shimura's proof was classical, ours is representation-theoretic. We deduce the structure theorem from a decomposition for the space of \emph{$\mathfrak{n}$-finite} automorphic forms on $\SL_2(\R)$. To prove this decomposition, we use the mechanism of \emph{category $\mathcal{O}$} and a careful analysis of the various possible indecomposable submodules. It is possible to achieve the same end by more direct methods, but we prefer this approach as it generalizes to other groups.

This note may be viewed as the toy case of our paper \cite{PSS14}, where we prove an analogous structure theorem for vector-valued nearly holomorphic Siegel modular forms of degree two.
\end{abstract}

 \maketitle

\section{Nearly holomorphic functions}
Let $\HH_1$ be the complex upper half plane. Let $N^p(\HH_1)$ be the space of functions $f:\:\HH_1\to\C$ of the form
$$
 f(\tau)=\sum_{j=0}^pf_j(\tau)y^{-j},\qquad\tau=x+iy,
$$
where $f_0,\ldots,f_p$ are holomorphic functions on $\HH_1$. Any element of the space $N(\HH_1)=\cup_{p=0}^\infty N^p(\HH_1)$ is called a nearly holomorphic function on $\HH_1$.  It is an exercise to show that
\begin{equation}\label{holcoeffuniqueeq}
 \sum_{j=0}^pf_j(\tau)y^{-j}=0\qquad\Longleftrightarrow\qquad f_j=0\text{ for all }j=0,\ldots,p.
\end{equation}
Hence, the holomorphic coefficients of a nearly holomorphic function are unique\-ly determined.

If $f$ is a nearly holomorphic function, and if there exists a non-zero real number $r$ such that $f(\tau+r)=f(\tau)$ for all $\tau\in\HH_1$, then the holomorphic coefficients $f_j$ of $f$ exhibit the same translation invariance; this follows from \eqref{holcoeffuniqueeq}. Each $f_j$ therefore admits a Fourier expansion $f_j(\tau)=\sum a_j(n)e^{2\pi in\tau/r}$. It follows that $f$ admits a Fourier expansion whose coefficients are polynomials in $y^{-1}$.

For an integer $k$, we define the weight $k$ slash operator on functions $f:\:\HH_1\to\C$ in the usual way:
$$
 (f|_kg)(\tau)=(c\tau+d)^{-k}f\Big(\frac{a\tau+b}{c\tau+d}\Big),\qquad g=\mat{a}{b}{c}{d}\in\SL_2(\R).
$$
Let $\Gamma$ be a congruence subgroup of $\SL_2(\Q)$. Let $N_k^p(\Gamma)$ denote the space of functions $F:\H_1\rightarrow\C$ such that
\begin{enumerate}
 \item $F \in N^p(\H_1)$;
 \item $F|_k\gamma = F$ for all $\gamma \in \Gamma$;
 \item $F$ satisfies the cusp condition. (This notion is defined in terms of Fourier expansions just as in the case of holomorphic modular forms; see, e.g., \S2.1 of \cite{Miyake1989}.)
\end{enumerate}
We denote by $N_k^p(\Gamma)^\circ$ the subspace of functions that vanish at every cusp. The space
$N_k(\Gamma)=\cup_{p=0}^\infty N_k^p(\Gamma)$ is the space of \emph{nearly holomorphic modular forms} with respect to $\Gamma$, and $N_k(\Gamma)^\circ = \cup_{p=0}^\infty N_k^p(\Gamma)^\circ$ is the space of \emph{nearly holomorphic cusp forms}. Evidently, $M_k(\Gamma):=N_k^0(\Gamma)$ is the usual space of holomorphic modular forms of weight $k$ with respect to $\Gamma$, and $S_k(\Gamma):=N_k^0(\Gamma)^\circ$ is the subspace of cusp forms. Nearly holomorphic modular forms occur naturally as special values of Eisenstein series and thus their arithmetic properties imply arithmetic properties for various $L$-functions via the theory of Rankin-Selberg type integrals. We refer the reader to the introduction of~\cite{PSS14} for further remarks in this direction.

For an integer $k$, we define the classical Maass weight raising and lowering operators $R_k,L_k$ on the space of smooth functions on $\HH_1$ by
\begin{equation}\label{RkLkdefeq}
  R_k=\frac ky+2i\frac{\partial}{\partial\tau},\qquad L_k=-2iy^2\frac{\partial}{\partial\bar\tau},
\end{equation}
where $\frac{\partial}{\partial\tau}=\frac12(\frac{\partial}{\partial x}-i\frac{\partial}{\partial y})$ and $\frac{\partial}{\partial\bar\tau}=\frac12(\frac{\partial}{\partial x}+i\frac{\partial}{\partial y})$ are the usual Wirtinger derivatives. Also define an operator $\Omega_k$ by
\begin{equation}\label{OmegakRkLkreleq}
 \Omega_k=\frac14k^2+\frac12R_{k-2}L_k+\frac12L_{k+2}R_k.
\end{equation}
A calculation shows that
\begin{equation}\label{Omegakdefeq}
 \Omega_k=y^2\Big(\frac{\partial^2}{\partial x^2}+\frac{\partial^2}{\partial y^2}\Big)-2iky\frac{\partial}{\partial\bar\tau}+\frac k2\Big(\frac k2-1\Big).
\end{equation}
The following lemma is readily verified.
\begin{lemma}\label{RkLkNplemma}
 Let $k$ be an integer, and $p$ be a non-negative integer. Let $\Gamma$ be a congruence subgroup of $\SL_2(\Q)$.
 \begin{enumerate}
  \item $R_k$ induces maps $N^p_k(\Gamma)\to N^{p+1}_{k+2}(\Gamma)$ and $N^p_k(\Gamma)^\circ\to N^{p+1}_{k+2}(\Gamma)^\circ$.
  \item $L_k$ induces maps $N^p_k(\Gamma)\to N^{p-1}_{k-2}(\Gamma)$ and $N^p_k(\Gamma)^\circ\to N^{p-1}_{k-2}(\Gamma)^\circ$.
  \item $\Omega_k$ induces endomorphisms of $N^p_k(\Gamma)$ and of $N^p_k(\Gamma)^\circ$.
 \end{enumerate}
 Here, we understand $N^p_k(\Gamma)=N_k^p(\Gamma)^\circ=0$ for $p<0$.
\end{lemma}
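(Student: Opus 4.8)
The plan is to reduce each assertion to three routine checks on a typical element $F=\sum_{j=0}^p f_j(\tau)y^{-j}\in N^p_k(\HH_1)$: that the operator preserves near-holomorphy with the stated shift in degree, that it is compatible with the slash operator so as to preserve $\Gamma$-invariance, and that it respects the cusp condition. Assertion (3) will then follow formally from (1) and (2).

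First I would settle the degree bookkeeping by a direct computation with the Wirtinger derivatives in \eqref{RkLkdefeq}. Using $\partial_\tau y=\tfrac{1}{2i}$ and $\partial_{\bar\tau}y=-\tfrac{1}{2i}$, together with the holomorphy of each $f_j$, one obtains
\begin{equation*}
R_k(f_jy^{-j})=2if_j'\,y^{-j}+(k-j)f_j\,y^{-j-1},\qquad L_k(f_jy^{-j})=-jf_j\,y^{-(j-1)}.
\end{equation*}
Summing over $j$ shows that $R_kF$ is nearly holomorphic of degree at most $p+1$ and that $L_kF$ is nearly holomorphic of degree at most $p-1$ (the $j=0$ term being annihilated), in both cases with holomorphic coefficients assembled from the $f_j$ and their derivatives. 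This already gives the near-holomorphy parts of (1) and (2).

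The essential input is the intertwining property
\begin{equation*}
R_k(F|_kg)=(R_kF)|_{k+2}\,g,\qquad L_k(F|_kg)=(L_kF)|_{k-2}\,g\qquad(g\in\SL_2(\R)),
\end{equation*}
which I would verify directly from $\Im(g\tau)=y|c\tau+d|^{-2}$ and $\tfrac{d}{d\tau}(g\tau)=(c\tau+d)^{-2}$, the point being that the contributions of $\tfrac{k}{y}$ and $2i\partial_\tau$ conspire to cancel the derivative of the automorphy factor $(c\tau+d)^{-k}$. This is the one step where care is genuinely required, and it is precisely the mechanism by which the Maass operators shift the weight. Granting it, the relation $F|_k\gamma=F$ for all $\gamma\in\Gamma$ yields at once $(R_kF)|_{k+2}\gamma=R_k(F|_k\gamma)=R_kF$ and similarly for $L_k$, so the outputs are $\Gamma$-modular of weights $k+2$ and $k-2$.

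For the cusp condition I would exploit that $R_k$ and $L_k$, being built from $y$, $\partial_\tau$ and $\partial_{\bar\tau}$, commute with the translation $\tau\mapsto\tau+r$, and hence preserve the Fourier frequency of each term in the expansion at any cusp; by the intertwining property one reduces to the expansion at $\infty$. The formulas above send a term $P(y^{-1})e^{2\pi in\tau/r}$ to a term of the same frequency $n$ whose coefficient is again a polynomial in $y^{-1}$, so no negative frequencies are created and an $n=0$ coefficient stays an $n=0$ coefficient; thus both the cusp condition and the vanishing of the constant term are preserved, which handles the $N^p_k(\Gamma)^\circ$ statements. Finally, for (3) the composite weight and degree counts show that $R_{k-2}L_k$ and $L_{k+2}R_k$ each carry $N^p_k(\Gamma)$ into itself, so by \eqref{OmegakRkLkreleq} the operator $\Omega_k$ is an endomorphism of $N^p_k(\Gamma)$, and of $N^p_k(\Gamma)^\circ$ because every constituent preserves cuspidality.
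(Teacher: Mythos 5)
Your proof is correct: the degree bookkeeping via $R_k(f_jy^{-j})=2if_j'y^{-j}+(k-j)f_jy^{-j-1}$ and $L_k(f_jy^{-j})=-jf_jy^{-(j-1)}$, the intertwining relations with the slash operator, and the frequency-preservation argument at the cusps are exactly the routine checks the paper has in mind when it states that the lemma is ``readily verified'' without supplying a proof. Deducing (3) from (1) and (2) via \eqref{OmegakRkLkreleq} likewise matches the paper's setup, so there is nothing to add.
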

Henceforth, we drop the subscripts and let $R$, $L$, and $\Omega$ denote the operators on $\bigoplus_{k}N_k(\Gamma)$ whose restrictions to $N_k(\Gamma)$ are given by $R_k$, $L_k$, and $\Omega_k$, respectively.
\begin{lemma}\label{Nkpfindimlemma}
 For any integer $k$ and non-negative integer $p$, the space $N^p_k(\Gamma)$ is finite-dimensional.
\end{lemma}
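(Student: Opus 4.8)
The plan is to induct on $p$, using the lowering operator $L$ to strip off one power of $y^{-1}$ at a time and thereby reduce the problem to the classical finite-dimensionality of spaces of holomorphic modular forms. The induction will be on $p$, run uniformly over all integers $k$.

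For the base case $p=0$ I would simply observe that $N^0_k(\Gamma)=M_k(\Gamma)$ is the usual space of holomorphic modular forms of weight $k$, which is finite-dimensional for every $k$ by the classical theory. This is the only substantive analytic input; everything else is formal. For the inductive step, the key is to compute the kernel of $L$ on $N^p_k(\Gamma)$ explicitly. Writing $f=\sum_{j=0}^p f_j y^{-j}$ with the $f_j$ holomorphic, and using the definition \eqref{RkLkdefeq} together with $\partial y/\partial\bar\tau=i/2$ (and $\partial f_j/\partial\bar\tau=0$), a short calculation gives
\[
  L_k f=-\sum_{j=1}^p j\,f_j\,y^{1-j},
\]
a nearly holomorphic function of degree $p-1$ whose holomorphic coefficient of $y^{-i}$ is $-(i+1)f_{i+1}$. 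By the uniqueness of holomorphic coefficients \eqref{holcoeffuniqueeq}, $L_k f=0$ forces $f_1=\dots=f_p=0$, i.e. $f=f_0$ is holomorphic. Hence the kernel of $L$ restricted to $N^p_k(\Gamma)$ is exactly $N^0_k(\Gamma)=M_k(\Gamma)$.

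Combining this kernel computation with part (2) of Lemma~\ref{RkLkNplemma}, which gives $L\colon N^p_k(\Gamma)\to N^{p-1}_{k-2}(\Gamma)$, I would assemble the exact sequence
\[
  0\longrightarrow M_k(\Gamma)\longrightarrow N^p_k(\Gamma)\xrightarrow{\ L\ } N^{p-1}_{k-2}(\Gamma).
\]
By the inductive hypothesis (applied in weight $k-2$) the target is finite-dimensional, and the kernel $M_k(\Gamma)$ is finite-dimensional by the base case, so rank–nullity yields $\dim N^p_k(\Gamma)\le\dim M_k(\Gamma)+\dim N^{p-1}_{k-2}(\Gamma)<\infty$. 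Since the argument is uniform in $k$, the induction closes.

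The one point requiring genuine care is the kernel identification: the calculation of $L_k f$ and the appeal to \eqref{holcoeffuniqueeq} are precisely what guarantee that $L$ lowers the degree $p$ by exactly one and has kernel consisting of honestly holomorphic forms, which is what makes the induction go through. The rest—finite-dimensionality of $M_k(\Gamma)$ and the formal rank–nullity bookkeeping—is standard.
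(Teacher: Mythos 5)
Your proof is correct and follows essentially the same route as the paper: induction on $p$ via the exact sequence $0\to M_k(\Gamma)\to N_k^p(\Gamma)\xrightarrow{L}N_{k-2}^{p-1}(\Gamma)$, with the base case being the classical finite-dimensionality of $M_k(\Gamma)$. The only difference is that you spell out the kernel computation $L_kf=-\sum_{j\ge1}jf_jy^{1-j}$ justifying exactness at $N_k^p(\Gamma)$, which the paper leaves implicit.
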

\begin{proof}
This is well known for $p=0$, since $N^0_k(\Gamma)=M_k(\Gamma)$ is simply the space of holomorphic modular forms of weight $k$. For $p>0$ there is an exact sequence
$$
 0\longrightarrow M_k(\Gamma)\longrightarrow N_k^p(\Gamma)\stackrel{L}{\longrightarrow}N_{k-2}^{p-1}(\Gamma).
$$
Hence the assertion follows by induction on $p$.
\end{proof}

The following well-known fact will be important for our arguments further below. (For a proof, see Theorem 2.5.2 of \cite{Miyake1989}.)
\begin{lemma}\label{holkneglemma}
 $S_k(\Gamma)=0$ if $k\leq0$, and $M_k(\Gamma)=0$ if $k<0$. The space $M_0(\Gamma)$ consists of the constant functions.
\end{lemma}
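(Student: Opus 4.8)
The plan is to reduce all three assertions to the compactness of the modular curve together with the maximum principle for (sub)harmonic functions. Throughout, let $X_\Gamma$ denote the standard compactification of $\Gamma\backslash\HH_1$ obtained by adjoining the finitely many cusps, which is a compact Riemann surface.

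First I would dispose of $M_0(\Gamma)$. By definition a form $f\in M_0(\Gamma)$ is a $\Gamma$-invariant holomorphic function on $\HH_1$, so it descends to a holomorphic function on the open curve $\Gamma\backslash\HH_1$. The cusp condition guarantees that its Fourier expansion at each cusp involves only non-negative powers of the local parameter, so $f$ extends to a holomorphic function on the compact surface $X_\Gamma$. By the maximum modulus principle (equivalently, Liouville's theorem on a compact Riemann surface), $f$ is constant, giving $M_0(\Gamma)=\C$.

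Next, for $k<0$, suppose $0\neq f\in M_k(\Gamma)$ and consider $\phi(\tau):=y^{k/2}|f(\tau)|$, where $y=\Im\tau$. This $\phi$ is $\Gamma$-invariant, since $\Im(\gamma\tau)=y|c\tau+d|^{-2}$ exactly cancels the factor $|c\tau+d|^{k}$ coming from the automorphy of $f$; hence $\phi$ descends to a continuous function on $\Gamma\backslash\HH_1$. As $\tau$ approaches any cusp, $f$ stays bounded (cusp condition) while $y^{k/2}\to 0$ because $k<0$, so $\phi$ extends continuously to $X_\Gamma$ with value $0$ at every cusp. Since $X_\Gamma$ is compact and $\phi\geq 0$ is not identically zero, $\phi$ attains a positive maximum, which pulls back to a point $\tau_0\in\HH_1$ at which necessarily $f(\tau_0)\neq 0$. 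On a neighborhood of $\tau_0$ we may write $\log\phi=\tfrac{k}{2}\log y+\log|f|$, and since $\log|f|$ is harmonic where $f\neq 0$ while $\Delta\log y=-y^{-2}$, we get $\Delta\log\phi=-\tfrac{k}{2}y^{-2}>0$. A function that is strictly subharmonic near $\tau_0$ cannot attain an interior maximum there, a contradiction; hence $M_k(\Gamma)=0$ for $k<0$.

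Finally, the statement about cusp forms is immediate: for $k<0$ we have $S_k(\Gamma)\subseteq M_k(\Gamma)=0$, while $S_0(\Gamma)\subseteq M_0(\Gamma)$ consists of constants that vanish at the cusps, forcing $S_0(\Gamma)=0$; together these give $S_k(\Gamma)=0$ for all $k\leq 0$. The main work, and the only genuinely analytic input, is the verification that $\phi$ extends continuously to $X_\Gamma$ and vanishes at the cusps together with the computation that $\log\phi$ is strictly subharmonic at an interior maximum; once these are in place the three claims follow formally from compactness and the maximum principle. An alternative, more standard route is to invoke the valence formula $\sum_{P} e_P^{-1}\,\mathrm{ord}_P(f)=\tfrac{k}{4\pi}\vol(\Gamma\backslash\HH_1)$, whose right-hand side is a positive multiple of $k$: since all orders of a holomorphic form are non-negative, this forces $f=0$ when $k<0$ and $f$ nowhere-vanishing when $k=0$.
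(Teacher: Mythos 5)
Your proof is correct. Note that the paper does not actually prove this lemma; it simply cites Theorem 2.5.2 of Miyake, whose argument runs through the correspondence between modular forms and sections of line bundles (equivalently, the valence formula computing the degree of the divisor of $f$ on the compactified curve) — this is essentially the ``alternative route'' you sketch at the end. Your primary argument is a genuinely different and more self-contained one: the strict subharmonicity of $\log\bigl(y^{k/2}|f|\bigr)$ at an interior maximum for $k<0$, combined with compactness of $X_\Gamma$ and the vanishing of the invariant norm at the cusps. This buys you an elementary proof needing only the maximum principle, at the cost of not yielding the finer dimension formulas that the divisor-theoretic approach gives. Two small points of care: (i) near a cusp $\mathfrak{a}\neq\infty$ it is not literally $f$ that stays bounded but $f|_k\sigma$ for $\sigma$ carrying $\infty$ to $\mathfrak{a}$; the argument still goes through because the $\Gamma$-invariance of $\phi$ gives $\phi(\sigma\tau)=y^{k/2}\,|(f|_k\sigma)(\tau)|$, which tends to $0$ as $y\to\infty$. (ii) In the valence-formula aside, ``$f$ nowhere vanishing for $k=0$'' does not by itself give $M_0(\Gamma)=\C$; one must apply the formula again to $f-f(\tau_0)$, which vanishes somewhere and hence is identically zero. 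Since your main argument for $M_0(\Gamma)=\C$ is the complete maximum-modulus one, neither point is a gap.
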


\section{Representations of \texorpdfstring{$\SL_2(\R)$}{} and differential operators}
To reinterpret elements of $N^p_k(\Gamma)$ as functions on $\SL_2(\R)$, we recall the basic representation theory of this group. Let $\mathfrak{g}=\mathfrak{sl}_2(\R)$ be the Lie algebra of $\SL_2(\R)$, consisting of all $2\times2$ real matrices with trace zero. Let $\mathfrak{g}_\C=\mathfrak{sl}_2(\C)$ be its complexification. The elements
\begin{equation}\label{HRLdefeq}
 H=-i\mat{0}{1}{-1}{0},\qquad R=\frac12\mat{1}{i}{i}{-1},\qquad L=\frac12\mat{1}{-i}{-i}{-1}
\end{equation}
of $\mathfrak{g}_\C$ satisfy the relations $[H,R]=2R$, $[H,L]=-2L$ and $[R,L]=H$. The \emph{Casimir element} is the element in the universal enveloping algebra $\mathcal{U}(\mathfrak{g}_\C)$ given by
\begin{equation}\label{casimirdefeq}
 \Omega=\frac14 H^2+\frac12RL+\frac12LR.
\end{equation}
Then $\Omega$ lies in the center $\mathcal{Z}$ of $\mathcal{U}(\mathfrak{g}_\C)$, and it is known that $\mathcal{Z}=\C[\Omega]$.

Let $K=\SO(2)$ be the standard maximal compact subgroup of $\SL_2(\R)$, consisting of all elements $r(\theta)=\mat{\cos(\theta)}{\sin(\theta)}{-\sin(\theta)}{\cos(\theta)}$ with $\theta\in\R$.
By ``representation of $\SL_2(\R)$'' we mean a $(\mathfrak{g},K)$-module. In such a module $(\pi,V)$, we say a non-zero $v\in V$ has weight $k$ if
$$
 \pi(r(\theta))v=e^{ik\theta}v\qquad\text{for }\theta\in\R,
$$
or equivalently, $\pi(H)v=kv$. In an irreducible representation, all weights have the same parity, and every weight occurs at most once. The operator $\pi(R)$ raises the weight by $2$, and the operator $\pi(L)$ lowers the weight by $2$. The \emph{weight structure} of an irreducible representation is the list of weights, written in order. The following is the complete list of irreducible, admissible $(\mathfrak{g},K)$-modules.
\begin{enumerate}
 \item \emph{Finite-dimensional representations}. For a positive integer $p$, let $\mathcal{F}_p$ be the irreducible finite-dimensional representation of $\SL_2(\R)$ with weight structure $[-p+1,-p+3,\ldots,p-3,p-1]$. Hence $\dim\mathcal{F}_p=p$.
 \item \emph{Discrete series representations}. For a positive integer $p$ we denote by $\mathcal{D}_{p,+}$ the discrete series representation of $\SL_2(\R)$ with weight structure $[p+1,\:p+3,\:\ldots]$. Similarly, let $\mathcal{D}_{p,-}$ be the discrete series representation of $\SL_2(\R)$ with weight structure $[\ldots,\:-p-3,\:-p-1]$. Hence, $p$ is not the minimal weight of $\mathcal{D}_{p,+}$, but the Harish-Chandra parameter.
 \item \emph{Limits of discrete series}. Let $\mathcal{D}_{0,+}$ be the irreducible representation of $\SL_2(\R)$ with weight structure $[1,\,3,\,5,\ldots]$, and let $\mathcal{D}_{0,-}$ be the irreducible representation of $\SL_2(\R)$ with weight structure $[\ldots,-5,\,-3,\,-1]$. Formally these representations look like members of the discrete series, but they are not square-integrable.
 \item \emph{Principal series representations}. Their weight structure is either $2\Z$ or $2\Z+1$. For our purposes, all we need to know about principal series representations is that the operators $R$ and $L$ act injectively on such a $(\mathfrak{g},K)$-module.
\end{enumerate}
\subsection*{Functions on \texorpdfstring{$\SL_2(\R)$}{} and functions on \texorpdfstring{$\HH_1$}{}}
Let $W(k)$ be the space of smooth functions $\Phi:\:\SL_2(\R)\to\C$ with the property $\Phi(gr(\theta))=e^{ik\theta}\Phi(g)$ for all $\theta\in\R$ and $g\in\SL_2(\R)$. These are the vectors of weight $k$ under the right translation action on the space of smooth functions. The operator $R$ induces a map $W(k)\to W(k+2)$, and $L$ induces a map $W(k)\to W(k-2)$. Let $W$ be the space of smooth functions on $\HH_1$. For $\Phi\in W(k)$ we define an element $\tilde\Phi\in W$ by
\begin{equation}\label{PhitildePhieq}
 \tilde\Phi(x+iy)=y^{-k/2}\,\Phi(\mat{1}{x}{}{1}\mat{y^{1/2}}{}{}{y^{-1/2}}).
\end{equation}
It is straightforward to verify that
\begin{equation}\label{PhitildePhieq2}
 (\tilde\Phi\big|_kg)(i)=\Phi(g)\qquad\text{for all }g\in\SL_2(\R).
\end{equation}
The map $\Phi\mapsto\tilde\Phi$ establishes an isomorphism $W(k)\cong W$.
\begin{lemma}\label{diffopHlemma}
 Let $R ,L,\Omega$ be the operators on $W$ defined in \eqref{RkLkdefeq} and \eqref{Omegakdefeq}. Then the diagrams
 $$
  \begin{CD}
   W(k)@>\sim>>W&\qquad\qquad&&W(k)@>\sim>>W&\qquad\qquad&&W(k)@>\sim>>W\\
   @V{L}VV @VV{L}V&@V{R}VV @VV{R}V&@V{\Omega}VV @VV{\Omega}V\\
   W(k-2)@>\sim>>W&&&W(k+2)@>\sim>>W&&&W(k)@>\sim>>W
  \end{CD}
 $$
 are commutative.
\end{lemma}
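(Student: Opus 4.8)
The plan is to work with the inverse of the isomorphism $\Phi\mapsto\tilde\Phi$. Given a smooth $f\in W$, set $\Phi_f(g)=(f|_kg)(i)$. Relation \eqref{PhitildePhieq2}, together with the elementary computation $(f|_k\mat{1}{x}{}{1}\mat{y^{1/2}}{}{}{y^{-1/2}})(i)=y^{k/2}f(x+iy)$, shows that $\Phi_f\in W(k)$ and $\widetilde{\Phi_f}=f$; as every element of $W(k)$ arises this way, it suffices to prove, for every $f$,
\[
 R\Phi_f=\Phi_{R_kf},\qquad L\Phi_f=\Phi_{L_kf},\qquad \Omega\Phi_f=\Phi_{\Omega_kf},
\]
where on the left $R,L,\Omega$ are the operators from \eqref{HRLdefeq}, \eqref{casimirdefeq} acting by right differentiation, and on the right $R_k,L_k,\Omega_k$ are the operators of \eqref{RkLkdefeq}, \eqref{Omegakdefeq}.

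First I would treat $R$ and $L$. Writing $R=\tfrac12(A+iB)$ and $L=\tfrac12(A-iB)$ with the real elements $A=\mat{1}{0}{0}{-1}$ and $B=\mat{0}{1}{1}{0}$ of $\mathfrak{g}$, I would evaluate $R\Phi_f$ and $L\Phi_f$ by differentiating along the one-parameter subgroups $\exp(tA)=\mat{e^t}{}{}{e^{-t}}$ and $\exp(tB)$ at $t=0$. Using $\Phi_f(g\exp(tX))=((f|_kg)|_k\exp(tX))(i)$ and the chain rule for the Wirtinger derivatives under the corresponding Möbius substitution at the base point $\tau=i$ (where $y=1$), a short computation gives
\[
 (R\Phi_f)(g)=\Big(\tfrac{k}{y}+2i\tfrac{\partial}{\partial\tau}\Big)(f|_kg)\Big|_{\tau=i}=\big(R_k(f|_kg)\big)(i),
\]
and likewise $(L\Phi_f)(g)=\big(L_k(f|_kg)\big)(i)$. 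Here the factors of $i$ and the Wirtinger terms coming from $A$ and $B$ combine so that the $\partial/\partial\bar\tau$ contribution survives only for $L$ and the $\partial/\partial\tau$ contribution only for $R$.

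To finish these two cases I would invoke the equivariance of the classical operators under the slash action,
\[
 R_k(f|_kg)=(R_kf)|_{k+2}g,\qquad L_k(f|_kg)=(L_kf)|_{k-2}g,
\]
a standard fact verified by the same Wirtinger chain rule. Combining this with the evaluation $(\phi|_{k'}\mat{1}{x}{}{1}\mat{y^{1/2}}{}{}{y^{-1/2}})(i)=y^{k'/2}\phi(x+iy)$ at a general point shows $R\Phi_f=\Phi_{R_kf}$ and $L\Phi_f=\Phi_{L_kf}$, i.e.\ the left and right diagrams commute.

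Finally, the middle diagram follows formally. Since $\Phi_f\in W(k)$, $H$ acts on it by the scalar $k$, so the term $\tfrac14H^2$ in \eqref{casimirdefeq} matches the constant $\tfrac14k^2$ in \eqref{Omegakdefeq}, while \eqref{OmegakRkLkreleq} writes $\Omega$ as $\tfrac14H^2+\tfrac12RL+\tfrac12LR$ and $\Omega_k$ as $\tfrac14k^2+\tfrac12R_{k-2}L_k+\tfrac12L_{k+2}R_k$. Composing the already-proved $R$- and $L$-diagrams—applying the $R$-case at weight $k-2$ to $\Phi_{L_kf}$ and the $L$-case at weight $k+2$ to $\Phi_{R_kf}$—then yields $\Omega\Phi_f=\Phi_{\Omega_kf}$. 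The only genuine work is the bookkeeping in the $R/L$ computation—keeping the signs, the factors of $i$, and the two Wirtinger derivatives straight—and checking the intertwining relations; once these are in hand the $\Omega$ case is automatic.
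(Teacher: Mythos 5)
Your proposal is correct and follows essentially the same route as the paper: the paper's proof likewise disposes of $R$ and $L$ by direct computation and then deduces the $\Omega$ case formally from \eqref{OmegakRkLkreleq} and \eqref{casimirdefeq}. You have merely spelled out the ``straightforward calculations'' (via the decomposition $R=\tfrac12(A+iB)$, $L=\tfrac12(A-iB)$ and the slash-equivariance of the Maass operators) that the paper leaves to the reader.
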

\begin{proof}
The assertions for $R$ and $L$ follow from straightforward calculations. The assertion for $\Omega$ then follows from \eqref{OmegakRkLkreleq} and \eqref{casimirdefeq}.
\end{proof}
The previous lemma is about smooth functions only and does not involve any transformation properties. If $\Phi\in W(k)$ satisfies $\Phi(\gamma g)=\Phi(g)$ for all $g\in\SL_2(\R)$ and all elements $\gamma$ of a congruence subgroup $\Gamma$, then it follows from \eqref{PhitildePhieq2} that $\tilde\Phi|_k\gamma=\tilde\Phi$ for all $\gamma\in\Gamma$. Conversely, given a smooth function $f$ on $\HH_1$ satisfying $f|_k\gamma=f$ for all $\gamma\in\Gamma$, we may consider the function $\Phi\in W(k)$ such that $\tilde\Phi=f$. This function is then left $\Gamma$-invariant. We will see in the next subsection that if $f\in N^p_k(\Gamma)$, then $\Phi$ is an automorphic form.

\section{The structure theorem for cusp forms}

Let $\Gamma\subset\SL_2(\Q)$ be a congruence subgroup. Let $\AA(\Gamma)$ be the space of automorphic forms on $\SL_2(\R)$, and let $\AA(\Gamma)^\circ$ be the subspace of cusp forms. Recall that automorphic forms are required to be smooth, left $\Gamma$-invariant, $K$-finite, $\mathcal{Z}$-finite, and slowly increasing; we refer to \cite{BorelJacquet1979} for the precise definitions. The spaces $\AA(\Gamma)$ and $\AA(\Gamma)^\circ$ are $(\mathfrak{g},K)$-modules with respect to right translation. Let $\AA_k(\Gamma)$ (resp.\ $\AA_k(\Gamma)^\circ$) be the space of automorphic forms (resp.\ cusp forms) $\Phi$ satisfying $H.\Phi=k\Phi$, or equivalently, $\Phi(gr(\theta))=e^{ik\theta}\Phi(g)$ for all $\theta\in\R$ and $g\in\SL_2(\R)$.

If $f\in\AA(\Gamma)$ and $g\in\AA(\Gamma)^\circ$, then the function $|fg|$ is integrable over $\Gamma\backslash\SL_2(\R)$. In particular, $\AA(\Gamma)^\circ\subset L^2(\Gamma\backslash\SL_2(\R))$. With respect to the $L^2$ inner product, the space $\AA(\Gamma)^\circ$ decomposes into an orthogonal direct sum of irreducible representations, each occurring with finite multiplicity.

Let $\Phi\in\AA(\Gamma)$. We will say that $\Phi$ is \emph{$\mathfrak{n}$-finite} if $L^v\Phi=0$ for large enough $v$. Let $\AA(\Gamma)_{\mathfrak{n}\text{-fin}}$ be the space of $\mathfrak{n}$-finite automorphic forms, and let $\AA(\Gamma)^\circ_{\mathfrak{n}\text{-fin}}$ be the subspace of $\mathfrak{n}$-finite cusp forms. The following properties are easy to verify:
\begin{itemize}
 \item $\AA(\Gamma)_{\mathfrak{n}\text{-fin}}$ is a $(\mathfrak{g},K)$-submodule of $\AA(\Gamma)$.
 \item $\AA(\Gamma)_{\mathfrak{n}\text{-fin}}$ is the direct sum of its weight spaces, i.e.: If $\Phi\in\AA(\Gamma)_{\mathfrak{n}\text{-fin}}$ and $\Phi=\Phi_1+\ldots+\Phi_m$ with $\Phi_i\in\AA_{k_i}(\Gamma)$ for different weights $k_i$, then $\Phi_i\in\AA(\Gamma)_{\mathfrak{n}\text{-fin}}$ for each $i$.
\end{itemize}
Analogous statements hold for cusp forms.
\begin{lemma}\label{Npkautformlemma}
 Let $k$ be an integer, and $p$ a non-negative integer. Let $\Gamma$ be a congruence subgroup of $\SL_2(\Q)$. Let $f\in N_k^p(\Gamma)$ be non-zero. Define a function $\Phi$ on $\SL_2(\R)$ by $\Phi(g)=(f|_kg)(i)$. Then $\Phi\in\AA_k(\Gamma)_{\mathfrak{n}\text{-fin}}$. If $f$ is a cusp form, then $\Phi\in\AA_k(\Gamma)^\circ_{\mathfrak{n}\text{-fin}}$.
\end{lemma}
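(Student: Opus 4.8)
The plan is to check, one at a time, each of the conditions that define membership in $\AA_k(\Gamma)_{\mathfrak{n}\text{-fin}}$: smoothness, left $\Gamma$-invariance, the weight-$k$ transformation under right translation by $K$, $\mathcal{Z}$-finiteness, slow increase, and finally $\mathfrak{n}$-finiteness. The key starting point is the dictionary between functions on $\SL_2(\R)$ and functions on $\HH_1$ developed in the previous subsection. Comparing the defining formula $\Phi(g)=(f|_kg)(i)$ with \eqref{PhitildePhieq2}, I see that $\Phi$ is exactly the preimage of $f$ under the isomorphism $W(k)\cong W$; that is, $\Phi\in W(k)$ and $\tilde\Phi=f$. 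This already delivers three of the required properties for free: $\Phi$ is smooth (because $f$ is), it satisfies $\Phi(gr(\theta))=e^{ik\theta}\Phi(g)$ by the very definition of $W(k)$ (so it has weight $k$ and is in particular $K$-finite), and it is left $\Gamma$-invariant. The last point uses condition (2) in the definition of $N_k^p(\Gamma)$: since $f|_k\gamma=f$ for $\gamma\in\Gamma$, the remark following Lemma \ref{diffopHlemma} gives $\Phi(\gamma g)=\Phi(g)$.

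Next I would dispose of $\mathcal{Z}$-finiteness and $\mathfrak{n}$-finiteness together, both via the differential-operator dictionary of Lemma \ref{diffopHlemma}. Since $\mathcal{Z}=\C[\Omega]$, it suffices to annihilate $\Phi$ by a single polynomial in $\Omega$. Under the isomorphism $W(k)\cong W$, the Casimir $\Omega$ corresponds to $\Omega_k$, which by Lemma \ref{RkLkNplemma}(3) restricts to an endomorphism of the finite-dimensional space $N_k^p(\Gamma)$ (finite-dimensionality being Lemma \ref{Nkpfindimlemma}). Hence $f$ satisfies the characteristic polynomial $P$ of this endomorphism, $P(\Omega_k)f=0$, and transporting back through the isomorphism yields $P(\Omega)\Phi=0$, so $\Phi$ is $\mathcal{Z}$-finite. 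For $\mathfrak{n}$-finiteness the argument is even more direct: under the same dictionary $L$ corresponds to the Maass operator $L_k$, and by Lemma \ref{RkLkNplemma}(2) each application of $L$ sends $N_k^p(\Gamma)$ into $N_{k-2}^{p-1}(\Gamma)$, lowering the superscript by one. After $p+1$ steps we reach $N_{k-2(p+1)}^{-1}(\Gamma)=0$ (recall the convention that these spaces vanish for negative superscript), so $L^{p+1}f=0$ and therefore $L^{p+1}\Phi=0$, as required.

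It remains to establish slow increase and, when $f$ is a cusp form, cuspidality; this is the step I expect to require the most care, though it is routine. Both properties should be extracted from the Fourier expansions furnished by the cusp condition in the definition of $N_k^p(\Gamma)$. At the cusp $\infty$, writing $\Phi(\mat{1}{x}{}{1}\mat{y^{1/2}}{}{}{y^{-1/2}})=y^{k/2}f(x+iy)$ via \eqref{PhitildePhieq}, the expansion of $f$ involves only frequencies $n\geq0$ with coefficients polynomial in $y^{-1}$; the terms with $n>0$ decay and the constant term is polynomial in $y^{-1}$, so $\Phi$ grows at most like a power of $y$, which is slow increase. The remaining cusps are handled by conjugating $\Gamma$ and repeating the estimate, and it is this bookkeeping over the finitely many cusps, together with the passage from the classical growth condition to the representation-theoretic notion of slow increase, that constitutes the genuine work. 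Finally, if $f\in N_k^p(\Gamma)^\circ$ vanishes at every cusp, then the constant term of $f$ at each cusp is zero, hence the constant term of $\Phi$ along the unipotent radical of each cusp vanishes, and $\Phi\in\AA_k(\Gamma)^\circ_{\mathfrak{n}\text{-fin}}$.
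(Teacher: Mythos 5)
Your proposal is correct and follows essentially the same route as the paper: identify $\Phi$ with the preimage of $f$ under $W(k)\cong W$, get smoothness, weight $k$ and $\Gamma$-invariance immediately, deduce $\mathcal{Z}$-finiteness from the fact that $\Omega$ preserves the finite-dimensional space $N_k^p(\Gamma)$ (Lemmas \ref{RkLkNplemma} and \ref{Nkpfindimlemma}) transported via Lemma \ref{diffopHlemma}, obtain $\mathfrak{n}$-finiteness from $L^{p+1}f=0$, and read off slow increase and cuspidality from the Fourier expansions at the cusps. You actually supply somewhat more detail than the paper does on the growth estimate and on the $\mathcal{Z}$-finiteness step (the characteristic polynomial of $\Omega_k$), but the structure of the argument is identical.
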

\begin{proof}
Evidently, $\Phi$ is smooth, left $\Gamma$-invariant and has weight $k$. Since $N^p_k(\Gamma)$ is finite-dimensional (Lemma \ref{Nkpfindimlemma}) and $\Omega$ acts on $N^p_k(\Gamma)$ (Lemma \ref{RkLkNplemma}), the function $f$ is $\C[\Omega]$-finite. Hence, by Lemma \ref{diffopHlemma}, the function $\Phi$ is $\mathcal{Z}$-finite. The holomorphy of $f$ at the cusps implies that $\Phi$ is slowly increasing. This proves $\Phi\in\AA_k(\Gamma)$. Cuspidality of $f$ translates into cuspidality of $\Phi$. To prove $\mathfrak{n}$-finiteness, observe that $L^vf=0$ for large enough $v$ by Lemma \ref{RkLkNplemma}. Hence $L^v\Phi=0$ for large enough $v$ by Lemma \ref{diffopHlemma} and Lemma~\ref{holkneglemma}.
\end{proof}

The following result is sometimes called the ``duality theorem''; see Theorem 2.10 of \cite{Gelbart1975}.
\begin{proposition}\label{AA0nfindecompprop}
 As $(\mathfrak{g},K)$-modules, we have
 $$
  \AA(\Gamma)^\circ_{\mathfrak{n}\text{-fin}}=\bigoplus_{\ell=1}^\infty n_\ell\mathcal{D}_{\ell-1,+},\qquad n_\ell=\dim S_\ell(\Gamma).
 $$
 The lowest weight vectors in the isotypical component $n_\ell\mathcal{D}_{\ell-1,+}$ correspond to elements of $S_\ell(\Gamma)$ via the map $\Phi\mapsto\tilde\Phi$, where $(\tilde\Phi|_kg)(i)=\Phi(g)$ for $g\in\SL_2(\R)$.
\end{proposition}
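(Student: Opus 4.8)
The plan is to begin with the abstract decomposition of $\AA(\Gamma)^\circ$ into irreducible $(\mathfrak{g},K)$-modules with finite multiplicities, recalled above, and then extract the $\mathfrak{n}$-finite part one constituent at a time. The key reduction is that for a direct sum $V=\bigoplus_i\sigma_i$ of irreducibles one has $V_{\mathfrak{n}\text{-fin}}=\bigoplus_i(\sigma_i)_{\mathfrak{n}\text{-fin}}$: a vector $v=\sum_i v_i$ with $v_i\in\sigma_i$ satisfies $L^n v=\sum_i L^n v_i$, so $v$ is $\mathfrak{n}$-finite precisely when each $v_i$ is. Everything then comes down to computing $(\sigma)_{\mathfrak{n}\text{-fin}}$ for each irreducible type in the classification.

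First I would run through that classification. On a principal series the operator $L$ acts injectively, so $(\sigma)_{\mathfrak{n}\text{-fin}}=0$. For the highest weight modules $\mathcal{D}_{p,-}$ and $\mathcal{D}_{0,-}$ the weight structure is unbounded below and $L$ maps each weight space isomorphically onto the next lower one, hence is again injective and these contribute nothing. The finite-dimensional $\mathcal{F}_p$ do not occur at all: $\AA(\Gamma)^\circ$ is a sum of unitary representations, $\mathcal{F}_p$ is non-unitary for $p\geq2$, and the trivial module $\mathcal{F}_1$ is not cuspidal. What survives are exactly the lowest weight modules $\mathcal{D}_{p,+}$ (with $\mathcal{D}_{0,+}$ the limiting case): the lowest weight vector, of weight $p+1$, is annihilated by $L$, and a vector of weight $p+1+2m$ is annihilated by $L^{m+1}$, so every vector is $\mathfrak{n}$-finite. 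Hence $\AA(\Gamma)^\circ_{\mathfrak{n}\text{-fin}}=\bigoplus_{p\geq0}m_p\,\mathcal{D}_{p,+}$ for multiplicities $m_p$ still to be determined.

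To compute $m_p$, I would examine the space $V_p$ of weight-$(p+1)$ vectors in $\AA(\Gamma)^\circ_{\mathfrak{n}\text{-fin}}$ that are killed by $L$. In a module $\mathcal{D}_{q,+}$ with $q<p$, a weight-$(p+1)$ vector is a nonzero $R$-power of the lowest weight vector and so is not killed by $L$; therefore $V_p$ receives a one-dimensional contribution from each copy of $\mathcal{D}_{p,+}$ and from nothing else, giving $\dim V_p=m_p$. On the other hand, for weight $k=p+1$ Lemma \ref{diffopHlemma} turns $L\Phi=0$ into $L_k\tilde\Phi=0$, i.e.\ $\partial\tilde\Phi/\partial\bar\tau=0$, so $\tilde\Phi$ is holomorphic; being also $\Gamma$-invariant and cuspidal, $\tilde\Phi\in S_{p+1}(\Gamma)$. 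Conversely each $f\in S_{p+1}(\Gamma)$ is holomorphic, hence $L_k f=0$, and its associated $\Phi$ lies in $V_p$ by Lemmas \ref{Npkautformlemma} and \ref{diffopHlemma}. Thus $\Phi\mapsto\tilde\Phi$ identifies $V_p$ with $S_{p+1}(\Gamma)$, giving $m_p=\dim S_{p+1}(\Gamma)$; re-indexing by $\ell=p+1$ yields the stated formula and the claim about lowest weight vectors.

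The computations involved are all light, and the two points that genuinely need care are structural rather than computational. One is that extracting the $\mathfrak{n}$-finite part must be shown compatible with the (possibly infinite) $L^2$-decomposition, which is the reduction in the first paragraph. The more delicate point, and the one I regard as the main obstacle, is ensuring that the count of $L$-annihilated weight-$(p+1)$ vectors is not contaminated by weight-$(p+1)$ vectors coming from the other modules $\mathcal{D}_{q,+}$ with $q<p$; this rests on the rigidity of the weight structures together with the injectivity of $L$ away from the lowest weight vectors, so I would isolate and state these facts cleanly before assembling the multiplicity argument.
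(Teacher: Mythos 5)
Your proof is correct and follows essentially the same route as the paper: decompose into irreducible constituents, rule out every type except the lowest weight modules $\mathcal{D}_{\ell-1,+}$, $\ell\geq1$, and determine the multiplicities by matching $L$-annihilated weight-$\ell$ vectors with elements of $S_\ell(\Gamma)$. The only real variation is that you exclude the finite-dimensional representations by non-unitarity, whereas the paper observes that a lowest weight vector in such a constituent would yield a holomorphic cusp form of non-positive weight, contradicting Lemma \ref{holkneglemma}.
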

\begin{proof}
Since $\AA(\Gamma)^\circ_{\mathfrak{n}\text{-fin}}$ is a $(\mathfrak{g},K)$-submodule of $\AA(\Gamma)^\circ$, it decomposes into an orthogonal direct sum of irreducible $(\mathfrak{g},K)$-modules. None of the irreducible constituents can be of the form $\mathcal{D}_{p,-}$ or a principal series representation, since any non-zero vector in such a constituent would not be $\mathfrak{n}$-finite. Neither can $\AA(\Gamma)^\circ_{\mathfrak{n}\text{-fin}}$ contain any finite-dimensional representations; the lowest weight vector in such a constituent would give rise to a holomorphic cusp form of non-positive weight, which is not possible by Lemma \ref{holkneglemma}. It follows that $\AA(\Gamma)^\circ_{\mathfrak{n}\text{-fin}}$ can only contain constituents of the form $\mathcal{D}_{\ell-1,+}$ for $\ell\geq1$. The fact that $\mathcal{D}_{\ell-1,+}$ occurs with multiplicity $\dim S_\ell(\Gamma)$ follows because a lowest weight vector in a constituent of the form $\mathcal{D}_{\ell-1,+}$ gives rise to an element of
$S_\ell(\Gamma)$, and conversely.
\end{proof}
\begin{remark}
 It follows from Proposition \ref{AA0nfindecompprop} that $\AA(\Gamma)^\circ_{\mathfrak{n}\text{-fin}}$ is an admissible $(\mathfrak{g},K)$-module.
\end{remark}
Knowing Proposition \ref{AA0nfindecompprop}, it is now easy to derive the following Structure Theorem for cuspidal nearly holomorphic modular forms.
\begin{theorem}[Structure theorem for cusp forms]\label{GL2cuspstructuretheorem}
 Fix non-nega\-tive integers $k,p$ and a congruence subgroup $\Gamma$ of $\SL_2(\Q)$. There is an orthogonal direct sum decomposition
 \begin{equation}\label{GL2cuspstructuretheoremeq1}
    N_k^p(\Gamma)^\circ =  \bigoplus_{\substack{\ell\ge 1\\\ell\equiv k\bmod{2}\\k-2p\le\ell\le k}} R^{(k-\ell)/2}\left(S_\ell(\Gamma)\right).
 \end{equation}
 In particular, $N_0^p(\Gamma)^\circ=0$ and $N_1^p(\Gamma)^\circ=S_1(\Gamma)$.
\end{theorem}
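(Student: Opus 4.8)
The plan is to transport the problem to the automorphic side via the correspondence $\Phi\mapsto\tilde\Phi$ relating functions on $\SL_2(\R)$ and on $\HH_1$, and then to read off the answer from the decomposition in Proposition~\ref{AA0nfindecompprop}. Lemma~\ref{Npkautformlemma} already attaches to each nonzero $f\in N_k^p(\Gamma)^\circ$ a form $\Phi\in\AA_k(\Gamma)^\circ_{\mathfrak{n}\text{-fin}}$ with $\tilde\Phi=f$, and by Lemma~\ref{RkLkNplemma} together with Lemma~\ref{diffopHlemma} this $\Phi$ satisfies $L^{p+1}\Phi=0$. The crux of the whole argument is to upgrade this to an isomorphism
$$
 N_k^p(\Gamma)^\circ\;\cong\;\{\Phi\in\AA_k(\Gamma)^\circ_{\mathfrak{n}\text{-fin}}:\ L^{p+1}\Phi=0\}.
$$
Once this is available, I would decompose the right-hand side by Proposition~\ref{AA0nfindecompprop} and identify each surviving piece with a summand $R^{(k-\ell)/2}(S_\ell(\Gamma))$.

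The forward inclusion and injectivity of the map are immediate, since $f=\tilde\Phi$ recovers $f$, so the work is the converse. First I would prove the purely analytic equivalence that, for a smooth function $f$ on $\HH_1$,
$$
 L^{p+1}f=0\quad\Longleftrightarrow\quad f\in N^p(\HH_1),
$$
with $L=-2iy^2\,\partial/\partial\bar\tau$ as in \eqref{RkLkdefeq}. I would argue by induction on $p$: one has $\ker L=N^0(\HH_1)$, the space of holomorphic functions, and the computation $L(y^{-j})=-j\,y^{-(j-1)}$ shows that $L$ maps $N^p(\HH_1)$ onto $N^{p-1}(\HH_1)$ with kernel the holomorphic functions; an elementary antiderivative in $\bar\tau$ then shows that every smooth solution of $Lf\in N^{p-1}(\HH_1)$ already lies in $N^p(\HH_1)$. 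Given now $\Phi\in\AA_k(\Gamma)^\circ_{\mathfrak{n}\text{-fin}}$ with $L^{p+1}\Phi=0$, Lemma~\ref{diffopHlemma} transfers this to $L^{p+1}\tilde\Phi=0$, whence $\tilde\Phi\in N^p(\HH_1)$; the left $\Gamma$-invariance of $\Phi$ gives $\tilde\Phi|_k\gamma=\tilde\Phi$ by \eqref{PhitildePhieq2}, and cuspidality of $\Phi$ supplies the vanishing cusp condition, so $\tilde\Phi\in N_k^p(\Gamma)^\circ$.

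With the isomorphism established I would substitute the decomposition $\AA(\Gamma)^\circ_{\mathfrak{n}\text{-fin}}=\bigoplus_{\ell\ge1}n_\ell\,\mathcal{D}_{\ell-1,+}$ and pass to weight-$k$ vectors. The weight $k$ occurs in $\mathcal{D}_{\ell-1,+}$, whose weight structure is $[\ell,\ell+2,\ldots]$, exactly when $\ell\ge1$, $\ell\equiv k\bmod 2$ and $\ell\le k$; in that case the weight-$k$ space of the isotypic component $n_\ell\,\mathcal{D}_{\ell-1,+}$ is $R^{(k-\ell)/2}$ applied to the lowest-weight space, which Proposition~\ref{AA0nfindecompprop} identifies with $S_\ell(\Gamma)$. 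Since $L$ lowers weights by $2$ and kills the lowest-weight vectors, $L^{p+1}$ annihilates this weight-$k$ space precisely when $(k-\ell)/2\le p$, i.e.\ $\ell\ge k-2p$. Collecting the surviving summands and translating $R^{(k-\ell)/2}$ back through Lemma~\ref{diffopHlemma} yields exactly \eqref{GL2cuspstructuretheoremeq1}, the orthogonality being inherited from the orthogonal decomposition of $\AA(\Gamma)^\circ$. The two special cases are then immediate: for $k=0$ the constraints $1\le\ell$ and $\ell\le 0$ are incompatible, so $N_0^p(\Gamma)^\circ=0$, while for $k=1$ only $\ell=1$ survives, giving $R^0(S_1(\Gamma))=S_1(\Gamma)$.

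The step I expect to be the main obstacle is the converse half of the isomorphism, and within it the analytic equivalence $L^{p+1}f=0\Leftrightarrow f\in N^p(\HH_1)$ together with the transfer of cuspidality from $\Phi$ to the cusp-vanishing of $\tilde\Phi$; by contrast, the representation-theoretic bookkeeping of the final step is routine once the weight structure of $\mathcal{D}_{\ell-1,+}$ and the injectivity of $R$ on it are invoked.
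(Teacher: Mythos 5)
Your proposal is correct and rests on the same engine as the paper's proof: transport $f$ to an automorphic form $\Phi$ via Lemma~\ref{Npkautformlemma}, invoke the decomposition of $\AA(\Gamma)^\circ_{\mathfrak{n}\text{-fin}}$ from Proposition~\ref{AA0nfindecompprop}, and use the weight structure of $\mathcal{D}_{\ell-1,+}$ together with the intertwining of $R$ and $L$ from Lemma~\ref{diffopHlemma}. The one genuine difference is that you route the argument through the full isomorphism $N_k^p(\Gamma)^\circ\cong\{\Phi\in\AA_k(\Gamma)^\circ_{\mathfrak{n}\text{-fin}}:L^{p+1}\Phi=0\}$, whose surjectivity forces you to prove the analytic lemma $\ker L^{p+1}=N^p(\HH_1)$ on smooth functions and to transfer cuspidality and the cusp conditions back from $\Phi$ to $\tilde\Phi$. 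The paper deliberately avoids this: it only proves the inclusion $N_k^p(\Gamma)^\circ\subseteq\sum R^{(k-\ell)/2}(S_\ell(\Gamma))$, by decomposing $\Phi=\sum\Phi_i$ into its isotypic components, writing each $\Phi_i$ as $c_i^{-1}R^{(k-\ell_i)/2}\Phi_{i,0}$ with $\Phi_{i,0}$ a lowest weight vector, and pulling each piece back to $\HH_1$ individually; the reverse inclusion is then immediate from Lemma~\ref{RkLkNplemma}(1) since $R^{(k-\ell)/2}(S_\ell(\Gamma))\subseteq N_k^{(k-\ell)/2}(\Gamma)^\circ$. Your extra lemma is true and your inductive antiderivative argument for it is sound (note $L(y^{-j})=-jy^{-(j-1)}$), but it buys you nothing beyond what Lemma~\ref{RkLkNplemma} already gives, and it is the only place where analytic care is really required (ruling out negative Fourier coefficients of $\tilde\Phi$ needs the moderate growth of $\Phi$, which you gloss over). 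So: correct, same approach, with one avoidable detour.
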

\begin{proof}
Let $f\in N_k^p(\Gamma)$. Define a function $\Phi$ on $\SL_2(\R)$ by $\Phi(g)=(f|_kg)(i)$. Then $\Phi\in\AA_k(\Gamma)^\circ_{\mathfrak{n}\text{-fin}}$ by Lemma \ref{Npkautformlemma}. If $f$ has weight $0$, then $f=0$, since the weight $0$ does not occur in $\AA_k(\Gamma)^\circ_{\mathfrak{n}\text{-fin}}$ by Proposition \ref{AA0nfindecompprop}. Assume in the following that $k\geq1$ and that $f$ is non-zero.

Write $\Phi=\sum\Phi_i$, where each $\Phi_i\in\AA_k(\Gamma)^\circ_{\mathfrak{n}\text{-fin}}$ generates an irreducible $(\mathfrak{g},K)$-module $V_i\cong\mathcal{D}_{\ell_i-1,+}$ with $\ell_i\geq1$; this is possible by Proposition \ref{AA0nfindecompprop}. Evidently, $f=\sum\tilde\Phi_i$, where $\tilde\Phi_i$ is the function on $\HH_1$ corresponding to $\Phi_i$ via \eqref{PhitildePhieq}.

Since $f\in N_k^p(\Gamma)^\circ$, it follows from (2) of Lemma \ref{RkLkNplemma} that $L^{p+1}f=0$, thus $L^{p+1}\Phi=0$ by Lemma \ref{diffopHlemma}, and then also $L^{p+1}\Phi_i=0$ for all $i$. The weight of $L^{p+1}\Phi_i$ being $k-2p-2$, it follows that $V_i$ only contains weights greater or equal to $k-2p$. Hence $\ell_i\geq k-2p$ for all $i$.

Let $\Phi_{i,0}\in V_i$ be a lowest weight vector; thus $\Phi_{i,0}$ has weight $\ell_i\leq k$, and $\ell_i\equiv k$ mod $2$. The corresponding function $\tilde\Phi_{i,0}$ on $\HH_1$ is an element of $S_{\ell_i}(\Gamma)$. Since every weight occurs only once in $V_i$, we have $R^{(k-\ell_i)/2}\Phi_{i,0}=c_i\Phi_i$ for some non-zero constant $c_i$. By Lemma \ref{diffopHlemma}, it follows that
$$
R^{(k-\ell_i)/2}\tilde\Phi_{i,0}=c_i\tilde\Phi_i,
$$
and hence
$$
 f=\sum\tilde\Phi_i=\sum c_i^{-1}R^{(k-\ell_i)/2}\tilde\Phi_{i,0}\in\sum_{\substack{\ell\ge 1\\\ell\equiv k\bmod{2}\\k-2p\le\ell\le k}} R^{(k-\ell)/2}\left(S_\ell(\Gamma)\right).
$$
This proves that the left hand side of \eqref{GL2cuspstructuretheoremeq1} is contained in the right hand side. The orthogonality of the right hand side follows from the above construction and the fact that the isotypical components in Proposition \ref{AA0nfindecompprop} are orthogonal; observe that the map $\Phi\mapsto\tilde\Phi$ is isometric with respect to the $L^2$-scalar product on the left hand side and the Petersson inner product on the right hand side.
\end{proof}
\begin{remark}
It is well known, or follows from an easy calculation, that $\Omega$ acts on $\mathcal{D}_{\ell-1,+}$ by the scalar $\frac12\ell(\frac12\ell-1)$. Hence, by Lemma \ref{diffopHlemma}, $\Omega$ acts on the subspace $R^{(k-\ell)/2}\left(S_\ell(\Gamma)\right)$ of $N_k^p(\Gamma)^\circ$ by $\frac12\ell(\frac12\ell-1))$. In particular, $\Omega$ acts diagonalizably on $N_k^p(\Gamma)^\circ$, and the pieces in the decomposition \eqref{GL2cuspstructuretheoremeq1} can be intrinsically characterized as the eigenspaces with respect to $\Omega$.
\end{remark}
\subsection*{Petersson inner products}
For $f,g \in N_k(\Gamma)$ with at least one of them in $N_k(\Gamma)^\circ$, we define the Petersson inner product $\langle f, g \rangle$ by the equation
$$
 \langle f, g \rangle = \vl(\Gamma \bs \H_1)^{-1}\int\limits_{\Gamma \bs \H_1} f(\tau) \overline{g(\tau)}\,\frac{dx dy}{y^2}.
$$
It can be easily checked that
\begin{equation}\label{peterssonequal}
 \langle f, g \rangle = \langle \Phi_f, \Phi_g \rangle,
\end{equation}
where $\Phi_f(h)=(f|_kh)(i)$ (and $\Phi_g$ is defined similarly) and the inner product of  $\Phi_f$ and $\Phi_g$ is defined by
$$
 \langle \Phi_f, \Phi_g \rangle = \frac1{\vl(\SL_2(\Z) \bs \SL_2(\R))}\int\limits_{\SL_2(\Z) \bs \SL_2(\R)} \Phi_f(h) \overline{\Phi_g(h)}\,dh.
$$
We define the subspace $\E_k(\Gamma)$ to be the orthogonal complement of $N_k(\Gamma)^\circ$ in $N_k(\Gamma)$. Let $\E_k^p(\Gamma)=\E_k(\Gamma)\cap N_k^p(\Gamma)$. We write $E_k(\Gamma)$ to mean $\E_k^0(\Gamma)$. In Corollary \ref{Nkporthogonalcorollary} below we will prove that $N_k^p(\Gamma)$ is the orthogonal direct sum of $N_k^p(\Gamma)^\circ$ and $\E_k^p(\Gamma)$.

\begin{lemma}\label{slashEorthogonallemma}
 Let $k$ be a non-negative integer. Let $f\in\E_k(\Gamma)$, and let $\Phi_f\in\AA(\Gamma)_{\mathfrak{n}\text{-fin}}$ be the corresponding function on $\SL_2(\R)$. Then $\Phi_f$ is orthogonal to $\AA(\Gamma)_{\mathfrak{n}\text{-fin}}^\circ$.
\end{lemma}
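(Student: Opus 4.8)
The plan is to transport the orthogonality statement through the correspondence $\Phi\mapsto\tilde\Phi$ and the isometry \eqref{peterssonequal}, so that it reduces to the very definition of $\E_k(\Gamma)$. First I would record that $\Phi_f$ is a vector of pure weight $k$, whereas $\AA(\Gamma)^\circ_{\mathfrak{n}\text{-fin}}$ is the direct sum of its weight spaces (stated in the excerpt). Since the $L^2$-inner product is $K$-invariant, vectors of distinct weights are mutually orthogonal, so $\Phi_f$ is automatically orthogonal to every weight vector of weight $\ne k$. As every element of $\AA(\Gamma)^\circ_{\mathfrak{n}\text{-fin}}$ is a finite sum of weight vectors, it remains only to show that $\Phi_f$ is orthogonal to every vector $\Psi\in\AA(\Gamma)^\circ_{\mathfrak{n}\text{-fin}}$ of weight $k$.

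The key step is to identify such weight-$k$ vectors with nearly holomorphic cusp forms. By Proposition \ref{AA0nfindecompprop}, a weight-$k$ vector $\Psi$ is a finite sum of vectors $R^{(k-\ell)/2}\Psi_0$, where each $\Psi_0$ is a lowest-weight vector of some constituent $\mathcal{D}_{\ell-1,+}$ with $1\le\ell\le k$ and $\ell\equiv k\bmod 2$, and where $\tilde\Psi_0\in S_\ell(\Gamma)$. Passing $R$ through the correspondence via Lemma \ref{diffopHlemma} gives $\tilde\Psi=\sum R^{(k-\ell)/2}\tilde\Psi_0$, and Lemma \ref{RkLkNplemma}(1) shows that each $R^{(k-\ell)/2}\tilde\Psi_0$ lies in $N_k(\Gamma)^\circ$. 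Hence $\tilde\Psi\in N_k(\Gamma)^\circ$.

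With this identification the conclusion is immediate. For a weight-$k$ vector $\Psi$, the isometry \eqref{peterssonequal} yields $\langle\Phi_f,\Psi\rangle=\langle f,\tilde\Psi\rangle$, an inner product that is well defined because $\tilde\Psi$ is a cusp form. Since $\tilde\Psi\in N_k(\Gamma)^\circ$ and $f\in\E_k(\Gamma)$ is by definition orthogonal to $N_k(\Gamma)^\circ$, this vanishes. Together with the weight-space orthogonality, this shows $\Phi_f\perp\AA(\Gamma)^\circ_{\mathfrak{n}\text{-fin}}$.

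The main point requiring care is the identification in the second step: one must verify that $\Phi\mapsto\tilde\Phi$ really carries a weight-$k$, $\mathfrak{n}$-finite cuspidal automorphic form into $N_k(\Gamma)^\circ$ rather than into a larger space of smooth functions, and this is exactly what Proposition \ref{AA0nfindecompprop} combined with Lemma \ref{RkLkNplemma} provides; everything else (convergence, weight-space orthogonality, and the isometry) is routine or already recorded. An alternative route avoids the explicit decomposition by using that $R$ and $L$ are, up to sign, mutually adjoint for the $L^2$-inner product, rewriting $\langle\Phi_f,R^{(k-\ell)/2}\Psi_0\rangle=\pm\langle f,R^{(k-\ell)/2}\tilde\Psi_0\rangle$ with $R^{(k-\ell)/2}\tilde\Psi_0\in N_k(\Gamma)^\circ$; I prefer the correspondence argument, as it reuses results already at hand and sidesteps verifying the adjointness relations.
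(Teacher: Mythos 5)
Your proof is correct and follows essentially the same route as the paper's: reduce via the weight decomposition and Proposition \ref{AA0nfindecompprop} to a weight-$k$ cuspidal vector $\Psi$, identify $\tilde\Psi$ as an element of $N_k(\Gamma)^\circ$, and conclude from \eqref{peterssonequal} and the definition of $\E_k(\Gamma)$. The only difference is that you spell out in more detail the step the paper states as ``$\Psi$ corresponds to an element $g$ of $N_k(\Gamma)^\circ$,'' which is a reasonable elaboration rather than a new argument.
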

\begin{proof}
Let $\Psi\in\AA(\Gamma)_{\mathfrak{n}\text{-fin}}^\circ$; we have to show that $\langle\Phi_f,\Psi\rangle=0$. We may assume that $\Psi$ generates an irreducible module $\mathcal{D}_{\ell-1,+}$ for some $\ell\geq1$. Since $\Phi_f$ has weight $k$, we may assume that $\Psi$ does as well. But then $\Psi$ corresponds to an element $g$ of $N_k(\Gamma)^\circ$. By hypothesis $\langle f,g\rangle=0$. Hence $\langle\Phi_f,\Psi\rangle=0$ by \eqref{peterssonequal}.
\end{proof}

\begin{lemma}\label{preserveeis}
 Let $k \ge 1$ and $v\ge 0$ be integers. Then $R^v$ takes $S_k(\Gamma)$ to $N_{k+2v}^{v}(\Gamma)^\circ$ and $E_k(\Gamma)$ to $\E_{k+2v}^{v}(\Gamma)$.
\end{lemma}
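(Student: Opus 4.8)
The plan is to treat the two assertions in turn. The first is a direct consequence of part (1) of Lemma~\ref{RkLkNplemma}: since $S_k(\Gamma)=N_k^0(\Gamma)^\circ$, applying $R$ a total of $v$ times raises the weight by $2v$ and increases the degree bound by one at each step while preserving cuspidality, so $R^v$ carries $S_k(\Gamma)$ into $N_{k+2v}^v(\Gamma)^\circ$. No additional argument is needed.

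For the second assertion, the same iteration of Lemma~\ref{RkLkNplemma}(1), together with $E_k(\Gamma)=\E_k^0(\Gamma)\subseteq M_k(\Gamma)=N_k^0(\Gamma)$, shows that $R^v$ maps $E_k(\Gamma)$ into $N_{k+2v}^v(\Gamma)$. The substance of the lemma is therefore to show that for $f\in E_k(\Gamma)$ the form $R^vf$ is orthogonal to the entire space $N_{k+2v}(\Gamma)^\circ$ of nearly holomorphic cusp forms, for then $R^vf\in\E_{k+2v}(\Gamma)\cap N_{k+2v}^v(\Gamma)=\E_{k+2v}^v(\Gamma)$.

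To establish this orthogonality I would transfer the question to $\SL_2(\R)$. Put $\Phi_f(g)=(f|_kg)(i)$; by Lemma~\ref{Npkautformlemma} we have $\Phi_f\in\AA_k(\Gamma)_{\mathfrak{n}\text{-fin}}$, and since $f\in\E_k(\Gamma)$, Lemma~\ref{slashEorthogonallemma} shows that $\Phi_f$ is orthogonal to all of $\AA(\Gamma)^\circ_{\mathfrak{n}\text{-fin}}$. By Lemma~\ref{diffopHlemma} the operator $R$ on $\HH_1$ corresponds to the Lie-algebra operator $R$ on $\SL_2(\R)$, so $R^vf$ corresponds to $R^v\Phi_f$; hence, by the isometry \eqref{peterssonequal} and Lemma~\ref{Npkautformlemma}, it suffices to prove $\langle R^v\Phi_f,\Psi\rangle=0$ for every $\Psi\in\AA_{k+2v}(\Gamma)^\circ_{\mathfrak{n}\text{-fin}}$. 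Here I would use the formal adjoint relation $\langle R\Phi,\Psi\rangle=-\langle\Phi,L\Psi\rangle$ for the $L^2$-pairing on $\Gamma\backslash\SL_2(\R)$; iterating it gives $\langle R^v\Phi_f,\Psi\rangle=(-1)^v\langle\Phi_f,L^v\Psi\rangle$. Since $\AA(\Gamma)^\circ_{\mathfrak{n}\text{-fin}}$ is a $(\mathfrak{g},K)$-submodule, $L^v\Psi$ again lies in it and has weight $k$, so $L^v\Psi\in\AA_k(\Gamma)^\circ_{\mathfrak{n}\text{-fin}}$; the right-hand side therefore vanishes by the orthogonality of $\Phi_f$ already noted, which completes the argument.

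The step requiring the most care is the adjoint relation $\langle R\Phi,\Psi\rangle=-\langle\Phi,L\Psi\rangle$, i.e.\ the integration by parts on $\Gamma\backslash\SL_2(\R)$, because $\Phi_f$ arises from an Eisenstein-type form and need not be square-integrable. The point is that at every step the pairing is taken against a cusp form ($\Psi$, then $L\Psi,\dots,L^{v-1}\Psi$), which is rapidly decreasing; consequently each product is integrable, as already recorded for $\AA(\Gamma)$ paired against $\AA(\Gamma)^\circ$, and the boundary contributions vanish in spite of the growth of $R^j\Phi_f$. The sign reflects the skew-adjointness of the real Lie-algebra action together with the fact that $R$ and $L$ are complex conjugate to one another inside $\mathfrak{sl}_2(\R)$; its exact value is irrelevant, since the quantity it multiplies is zero.
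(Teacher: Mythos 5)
Your proof is correct and follows essentially the same route as the paper: the first assertion via Lemma~\ref{RkLkNplemma}, and the second by passing to $\Phi_f$ on $\SL_2(\R)$, invoking Lemma~\ref{slashEorthogonallemma} and \eqref{peterssonequal}, and concluding that the module generated by $\Phi_f$ stays orthogonal to the cuspidal $\mathfrak{n}$-finite forms. The only difference is cosmetic: where the paper simply asserts that $\U(\mathfrak{g}_\C)\Phi_f$ and $\U(\mathfrak{g}_\C)\Phi_g$ are orthogonal submodules, you make the underlying mechanism explicit via the adjoint relation $\langle R\Phi,\Psi\rangle=-\langle\Phi,L\Psi\rangle$ and its iteration, including the (legitimate) remark that integrability against the rapidly decreasing cusp form justifies the integration by parts.
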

\begin{proof} The fact that $R^v$ takes $S_k(\Gamma)$ to $N_{k+2v}^{v}(\Gamma)^\circ$ is an immediate consequence of the fact that the differential operator $R$ commutes with the $|_k$ operator and does not increase the support of the Fourier coefficients.

Let us show that $R^v$ takes $E_k(\Gamma)$ to $\E_{k+2v}^{ v}(\Gamma)$. Let $f \in E_k(\Gamma)$. In view of~\eqref{peterssonequal}, it suffices to show that $R^v(\Phi_f)$ and $\Phi_g$ are orthogonal for all $g \in  N_{k+2v}^{v}(\Gamma)^\circ$. But note that $\U(\mathfrak{g}_\C)\Phi_f$ and $\U(\mathfrak{g}_\C)\Phi_g$ are orthogonal submodules of $\AA(\Gamma)$ (as $\U(\mathfrak{g}_\C)\Phi_g$ is completely contained in $\AA(\Gamma)^\circ$ and $\U(\mathfrak{g}_\C)\Phi_f$ is contained in the orthogonal complement of $\AA(\Gamma)^\circ$ by Lemma \ref{slashEorthogonallemma}). Hence $R^v(\Phi_f)$ and $\Phi_g$ are orthogonal.
\end{proof}
\begin{lemma}\label{peterssoninvcusp}Let $f \in S_k(\Gamma)$. Then for all $v \ge 0$, there exists a constant $c_{k,v}$ (depending only on $k, v$) such that $$\langle R^v(f), R^v(f) \rangle  = c_{k,v}\langle f , f\rangle.$$
\end{lemma}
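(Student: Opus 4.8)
The plan is to transport everything to the automorphic side and exploit the fact that an irreducible unitary $(\mathfrak{g},K)$-module carries an essentially unique invariant inner product. Concretely, set $\Phi_f(g)=(f|_kg)(i)$, so that $\Phi_f\in\AA_k(\Gamma)^\circ_{\mathfrak{n}\text{-fin}}$ by Lemma~\ref{Npkautformlemma}. Because $f$ is holomorphic we have $L_kf=0$, and hence $L\Phi_f=0$ by Lemma~\ref{diffopHlemma}; thus $\Phi_f$ is a lowest weight vector of weight $k$. Since $R^vf\in N_{k+2v}^{v}(\Gamma)^\circ$ (Lemma~\ref{preserveeis}) has associated automorphic form $\Phi_{R^vf}=R^v\Phi_f$ (again Lemma~\ref{diffopHlemma}), the isometry~\eqref{peterssonequal} reduces the claim to the purely representation-theoretic statement
$$
\langle R^v\Phi_f,R^v\Phi_f\rangle=c_{k,v}\,\langle\Phi_f,\Phi_f\rangle,
$$
with $c_{k,v}$ independent of $f$ and of $\Gamma$.

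First I would check that the submodule $V=\U(\mathfrak{g}_\C)\Phi_f$ is irreducible and isomorphic to $\mathcal{D}_{k-1,+}$: being generated by a single lowest weight vector of weight $k\ge1$, it is a lowest weight module, and the relation $LR^j\Phi_f=-j(j+k-1)R^{j-1}\Phi_f$ (an immediate consequence of $[R,L]=H$, $[H,R]=2R$ together with $L\Phi_f=0$) shows that $R^j\Phi_f\neq0$ for all $j$ and that no proper nonzero submodule can exist. The restriction of the $L^2$ inner product to $V$ is a positive-definite invariant Hermitian form; since $\mathcal{D}_{k-1,+}$ is irreducible and unitary, Schur's lemma forces this form to be unique up to a positive scalar. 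Consequently the ratio $\langle R^v\Phi_f,R^v\Phi_f\rangle/\langle\Phi_f,\Phi_f\rangle$ is an intrinsic invariant of the pair $(\mathcal{D}_{k-1,+},v)$, and hence depends only on $k$ and $v$.

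To pin down the constant (and to make the argument self-contained), I would compute the ratio directly. In a unitary $(\mathfrak{g},K)$-module the real Lie algebra acts by skew-adjoint operators, so $R^*=-L$; the only analytic point here is that the integration by parts underlying this adjointness has no boundary contribution, which holds because cusp forms are rapidly decreasing on $\Gamma\backslash\SL_2(\R)$. Granting $R^*=-L$, one obtains
$$
\langle R^v\Phi_f,R^v\Phi_f\rangle=-\langle R^{v-1}\Phi_f,LR^v\Phi_f\rangle=v(v+k-1)\,\langle R^{v-1}\Phi_f,R^{v-1}\Phi_f\rangle,
$$
and iterating yields $c_{k,v}=\prod_{j=1}^{v}j(j+k-1)=v!\,k(k+1)\cdots(k+v-1)$. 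The main obstacle is precisely this adjointness/invariance step on the non-compact, finite-volume quotient; once cuspidality is invoked to annihilate the boundary terms (or, alternatively, once one appeals to the abstract uniqueness of the invariant inner product from the previous paragraph), the remainder of the argument is formal.
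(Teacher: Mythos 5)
Your proposal is correct and follows essentially the same route as the paper: both arguments reduce via \eqref{peterssonequal} to the fact that $\Phi_f$ is a lowest weight vector generating a copy of $\mathcal{D}_{k-1,+}$, on which the invariant inner product is unique up to scalar, so the ratio $\langle R^v\Phi_f,R^v\Phi_f\rangle/\langle\Phi_f,\Phi_f\rangle$ is an invariant of $(k,v)$ alone. Your additional explicit evaluation $c_{k,v}=\prod_{j=1}^{v}j(j+k-1)$, obtained from $R^{*}=-L$ and $LR^{j}\Phi_f=-j(j+k-1)R^{j-1}\Phi_f$, is a correct bonus that the paper does not carry out.
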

\begin{proof}
Consider the $(\mathfrak{g},K)$ module $\mathcal{D}_{k-1,+}$ and let $v_0$ be a lowest-weight vector in it. Note that $v_0$ is unique up to multiples. It is well-known that $\mathcal{D}_{k-1,+}$ is unitarizable; let $\langle, \rangle$ denote the (unique up to multiples) $\mathfrak{g}$-invariant inner product on it. Put $c_{k,v} = \langle R^v(v_0), R^v(v_0) \rangle /\langle v_0 , v_0\rangle.$ Note that $c_{k,v}$ does not depend on the choice of model for $\mathcal{D}_{k-1,+}$, the choice of $v_0$ or the normalization of inner products.

Now all we need to observe is that the automorphic form $\Phi_f\in\AA(\Gamma)_{\mathfrak{n}\text{-fin}}^\circ$ corresponding to $f$ generates a module isomorphic to $\mathcal{D}_{k-1,+}$, that $\Phi_f$ is a lowest weight vector in this module, and \eqref{peterssonequal}.
\end{proof}

\begin{proposition}\label{peterssoninvgeneral}Let $f \in M_k(\Gamma)$, $g \in S_k(\Gamma)$. Then $$\langle R^v(f), R^v(g) \rangle  = c_{k,v}\langle f , g\rangle,$$ where the constant $c_{k,v}$ is as in the previous lemma.
\end{proposition}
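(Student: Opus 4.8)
The plan is to push the whole identity to the group $\SL_2(\R)$ and exploit that $R$ and $L$ are, up to sign, formal adjoints for the invariant inner product. Let $\Phi_f$ and $\Phi_g$ be the automorphic forms attached to $f$ and $g$. By \eqref{peterssonequal} one has $\langle R^v(f),R^v(g)\rangle=\langle R^v\Phi_f,R^v\Phi_g\rangle$, and both sides are defined because $g$, hence $\Phi_g$, is cuspidal. The proof then consists entirely of moving the $v$ factors of $R$ off the first argument and onto the second.

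The key input is the adjointness relation $\langle R\Phi,\Psi\rangle=-\langle\Phi,L\Psi\rangle$. This comes from the fact that each element of the real Lie algebra $\mathfrak{g}=\mathfrak{sl}_2(\R)$ acts skew-Hermitianly under right translation, together with the observation (immediate from \eqref{HRLdefeq}) that $R$ and $L$ are interchanged by the conjugation defining this real form; integrating by parts over $\Gamma\bs\SL_2(\R)$ then yields the relation, provided the integrals converge and no boundary terms survive. Iterating gives $\langle R^v\Phi_f,R^v\Phi_g\rangle=(-1)^v\langle\Phi_f,L^vR^v\Phi_g\rangle$. Verifying the convergence and the vanishing of boundary terms is the step I expect to demand the most care; it is precisely here that the cuspidality of $g$ is essential, since $\Phi_g$ is rapidly decreasing while $\Phi_f$ is only of moderate growth, so that the products and all intermediate derivatives are integrable with no contribution at the cusps.

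It then remains to compute $L^vR^v\Phi_g$. By Proposition \ref{AA0nfindecompprop}, since $g\in S_k(\Gamma)$ the vector $\Phi_g$ is a lowest weight vector of weight $k$, annihilated by $L$ and lying in the $\mathcal{D}_{k-1,+}$-isotypic part of $\AA(\Gamma)^\circ_{\mathfrak{n}\text{-fin}}$. Because the weight-$k$ space of $\mathcal{D}_{k-1,+}$ is one-dimensional, the element $L^vR^v\in\U(\mathfrak{g}_\C)$ acts on any such lowest weight vector by a scalar $\lambda_{k,v}$ depending only on $k$ and $v$; hence $L^vR^v\Phi_g=\lambda_{k,v}\Phi_g$, and therefore $\langle R^v(f),R^v(g)\rangle=(-1)^v\lambda_{k,v}\langle f,g\rangle$. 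Finally, the constant in Lemma \ref{peterssoninvcusp} is by definition $c_{k,v}=\langle R^v(v_0),R^v(v_0)\rangle/\langle v_0,v_0\rangle$ computed on a lowest weight vector $v_0$ of $\mathcal{D}_{k-1,+}$; running the same adjointness there gives $c_{k,v}=(-1)^v\lambda_{k,v}$, so that $\langle R^v(f),R^v(g)\rangle=c_{k,v}\langle f,g\rangle$, as required. Note that this argument never uses the holomorphy of $f$, only that $g$ is a holomorphic cusp form.

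Alternatively, one can stay closer to the machinery already set up: decompose $f=f_0+f_1$ with $f_0\in S_k(\Gamma)$ the orthogonal projection onto cusp forms and $f_1\perp S_k(\Gamma)$, handle the $f_0$ part by polarizing Lemma \ref{peterssoninvcusp}, and kill the cross term by showing $f_1\in E_k(\Gamma)$ and invoking Lemma \ref{preserveeis}. However, verifying $f_1\in E_k(\Gamma)$ — that $f_1$ is orthogonal not merely to $S_k(\Gamma)$ but to all nearly holomorphic cusp forms — again reduces, via the structure theorem, to the same adjointness computation $\langle f_1,R^{(k-\ell)/2}h_\ell\rangle=(-1)^{(k-\ell)/2}\langle L^{(k-\ell)/2}\Phi_{f_1},\Phi_{h_\ell}\rangle=0$ using $L\Phi_{f_1}=0$, so the direct route above is the more economical one.
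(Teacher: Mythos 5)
Your proof is correct, but it takes a genuinely different route from the paper's, which is a two-line reduction: write $f=f_0+f_1$ with $f_0\in S_k(\Gamma)$ and $f_1\in E_k(\Gamma)$, discard $f_1$ using Lemma~\ref{preserveeis} (since $R^v f_1\in\E_{k+2v}^{v}(\Gamma)$ is orthogonal to $R^v g\in N^{v}_{k+2v}(\Gamma)^\circ$, while $\langle f_1,g\rangle=0$ as well), and then polarize Lemma~\ref{peterssoninvcusp} by applying it to $f_0+g$ (and $f_0+ig$). Your direct computation --- moving all $v$ copies of $R$ across the inner product via $\langle R\Phi,\Psi\rangle=-\langle\Phi,L\Psi\rangle$ and using that $L^vR^v$ acts by a scalar on the one-dimensional lowest-weight space of $\mathcal{D}_{k-1,+}$, with the same algebra in the abstract unitary model identifying that scalar with $(-1)^v c_{k,v}$ --- is sound; the adjointness identity you rely on is exactly the ``general formula'' the paper itself invokes without proof in Corollary~\ref{Nkporthogonalcorollary}, and the cuspidality of $g$ is indeed what justifies the integration by parts, so you are working at the same level of rigor as the paper. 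What your approach buys: it proves the stronger statement that the identity holds for any weight-$k$ automorphic form $f$ of moderate growth (holomorphy of $f$ is never used), and it sidesteps the point you correctly flag at the end, namely that the paper's reduction tacitly uses $M_k(\Gamma)=S_k(\Gamma)\oplus E_k(\Gamma)$ --- i.e.\ that the orthocomplement of $S_k(\Gamma)$ in $M_k(\Gamma)$ is orthogonal to \emph{all} of $N_k(\Gamma)^\circ$ --- which is only established later, in Corollary~\ref{Nkporthogonalcorollary}, by the very same adjointness computation. What the paper's route buys is brevity and the avoidance of any explicit handling of $L^vR^v$.
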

\begin{proof}Because of Lemma~\ref{preserveeis}, we may assume that $f$ and $g$ both belong to $S_k(\Gamma)$. Now the Proposition follows by applying the previous lemma to $f+g$.
\end{proof}
\section{The non-cuspidal case}
\subsection*{The obstruction in the non-cuspidal case}
The Structure Theorem \ref{GL2cuspstructuretheorem} cannot hold without modifications for non-cuspidal nearly holomorphic modular forms. The reason is the existence of the weight $2$ Eisenstein series
\begin{equation}\label{e2defeq}
 E_2(\tau)=-\frac{3}{\pi y}+1-24\sum_{n=1}^\infty\sigma_1(n)e^{2\pi in\tau},\qquad\sigma_1(n)=\sum_{d|n}d.
\end{equation}
As is well known, $E_2$ is modular with respect to $\SL_2(\Z)$; thus, $E_2\in N^1_2(\Gamma)$ for any congruence subgroup $\Gamma$ of $\SL_2(\Z)$. But evidently $E_2$ cannot be obtained via raising operators from holomorphic forms of lower weight, since the only modular forms of weight $0$ are the constant functions.

Let $\Phi_2\in\AA_2(\Gamma)_{\mathfrak{n}\text{-fin}}$ be the automorphic form corresponding to $E_2$ via Lemma \ref{Npkautformlemma}. Let $V_{\Phi_2}$ be the $(\mathfrak{g},K)$-module generated by $\Phi_2$. Since $L_2\Phi_2=\frac3\pi$, we have $L\Phi_2\in\C$ by Lemma \ref{diffopHlemma}. The weight structure of $V_{\Phi_2}$ is therefore $[0,2,4,\ldots]$, and the constant functions are a submodule of $V_{\Phi_2}$. More precisely, there is an exact sequence
\begin{equation}\label{E2exactseqeq}
 0\longrightarrow\C\longrightarrow V_{\Phi_2}\longrightarrow\mathcal{D}_{1,+}\longrightarrow0;
\end{equation}
recall that $\mathcal{D}_{1,+}$ is the lowest weight module with weight structure $[2,4,6,\ldots]$. Clearly, this sequence does not split. Consequently, unlike in the cuspidal case, the space $\AA(\Gamma)_{\mathfrak{n}\text{-fin}}$ is not the direct of irreducible submodules. However, the following result states that $V_{\Phi_2}$ represents the only obstruction:
\begin{proposition}\label{AAnfindecompprop}
 As $(\mathfrak{g},K)$-modules, we have
 $$
  \AA(\Gamma)_{\mathfrak{n}\text{-fin}}=V_{\Phi_2}\:\oplus\;\bigoplus_{\ell=1}^\infty n_\ell\mathcal{D}_{\ell-1,+},\qquad n_\ell=\dim M_\ell(\Gamma).
 $$
 The lowest weight vectors in the isotypical component $n_\ell\mathcal{D}_{\ell-1,+}$ correspond to elements of $M_\ell(\Gamma)$ via the map $\Phi\mapsto\tilde\Phi$, where $(\tilde\Phi|_kg)(i)=\Phi(g)$ for $g\in\SL_2(\R)$. The module $V_{\Phi_2}$ sits in the exact sequence \eqref{E2exactseqeq} and is generated by the function $\Phi_2$ such that $\tilde\Phi_2=E_2$.
\end{proposition}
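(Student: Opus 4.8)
The plan is to follow the template of the cuspidal duality theorem (Proposition~\ref{AA0nfindecompprop}), but with one crucial modification: the space $\AA(\Gamma)_{\mathfrak{n}\text{-fin}}$ is not contained in $L^2(\Gamma\backslash\SL_2(\R))$ and carries no invariant inner product, so I cannot invoke unitarity to force semisimplicity. Instead I would decompose the module by central character and then analyze the possible non-split extensions by hand; this is the ``category $\mathcal{O}$'' ingredient promised in the introduction. Write $M:=\AA(\Gamma)_{\mathfrak{n}\text{-fin}}$.

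The first step is an elementary weight bound: $\AA_k(\Gamma)_{\mathfrak{n}\text{-fin}}=0$ for $k<0$, and for $k=0$ it is the one-dimensional space of constants. Indeed, if $\Phi$ is a nonzero element and $v\ge1$ is minimal with $L^v\Phi=0$, then $L^{v-1}\Phi$ is a lowest weight vector, so $\widetilde{L^{v-1}\Phi}$ is holomorphic and $\Gamma$-invariant of weight $k-2(v-1)$ and hence lies in $M_{k-2(v-1)}(\Gamma)$; Lemma~\ref{holkneglemma} forces this to vanish once the weight is negative, and an induction gives the claim. Next, since every automorphic form is $\mathcal{Z}$-finite and $\mathcal{Z}=\C[\Omega]$, the module $M$ is the direct sum of its generalized $\Omega$-eigenspaces $M^{(\lambda)}$. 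I would then classify the possible irreducible subquotients of $M$. Because $L$ is locally nilpotent on $M$, and hence on every subquotient, neither the principal series nor the $\mathcal{D}_{p,-}$ can occur (on these $L$ acts injectively); by the weight bound no finite-dimensional $\mathcal{F}_p$ with $p\ge2$ can occur either, since such a module has the negative weight $1-p$. Thus the only composition factors are the $\mathcal{D}_{\ell-1,+}$ $(\ell\ge1)$ and the trivial representation $\mathcal{F}_1=\C$. Recording that $\Omega$ acts on $\mathcal{D}_{\ell-1,+}$ by $\tfrac12\ell(\tfrac12\ell-1)$ and on $\C$ by $0$, one sees that the blocks are indexed by the eigenvalue $\lambda$, and that two of these simples share a block only for the pair $\{\C,\mathcal{D}_{1,+}\}$, both lying in $\lambda=0$.

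For $\lambda\neq0$ the analysis is then straightforward: every composition factor of $M^{(\lambda)}$ is one fixed $\mathcal{D}_{\ell-1,+}$, the weights are bounded below by $\ell$, and $\mathrm{Ext}^1(\mathcal{D}_{\ell-1,+},\mathcal{D}_{\ell-1,+})=0$ (a weight-$\ell$ lift of a lowest weight vector is automatically killed by $L$ for weight reasons, so it splits the extension). Hence $M^{(\lambda)}$ is semisimple, and its multiplicity equals the dimension of its space of weight-$\ell$ lowest weight vectors. Since $L\Phi=0$ is equivalent to $\tilde\Phi$ being holomorphic, Lemma~\ref{Npkautformlemma} identifies these with $M_\ell(\Gamma)$, giving multiplicity $n_\ell=\dim M_\ell(\Gamma)$ and accounting for all $\mathcal{D}_{\ell-1,+}$ with $\ell\neq2$.

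The block $M^{(0)}$ is where the obstruction lives, and I expect this to be the main difficulty. Its composition factors are $\C$ and $\mathcal{D}_{1,+}$, its weights lie in $\{0,2,4,\dots\}$, and its weight-$0$ space is the line of constants, which is the unique trivial submodule. Passing to $\bar M:=M^{(0)}/\C$, all composition factors are $\mathcal{D}_{1,+}$, so as above $\bar M\cong m\,\mathcal{D}_{1,+}$, where $m=\dim\AA_2(\Gamma)_{\mathfrak{n}\text{-fin}}$ is finite (it injects into $N_2(\Gamma)$, and applying $L$ shows $\dim N_2(\Gamma)\le\dim M_2(\Gamma)+1$). The extension $0\to\C\to M^{(0)}\to m\,\mathcal{D}_{1,+}\to0$ is classified by a class in $\mathrm{Ext}^1(m\,\mathcal{D}_{1,+},\C)\cong\C^m$, using that any extension of $\mathcal{D}_{1,+}$ by $\C$ is pinned down by the single scalar describing the action of $L$ on a weight-$2$ lift. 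This class is nonzero, because $\Phi_2$ generates a submodule realizing the non-split sequence~\eqref{E2exactseqeq}; reducing the class to rank one by an automorphism of $m\,\mathcal{D}_{1,+}$ yields $M^{(0)}\cong V_{\Phi_2}\oplus(m-1)\mathcal{D}_{1,+}$. Finally, counting weight-$2$ lowest weight vectors on both sides — there are $\dim M_2(\Gamma)=n_2$ of them, while $V_{\Phi_2}$ contributes none — forces $m-1=n_2$, so $M^{(0)}\cong V_{\Phi_2}\oplus n_2\,\mathcal{D}_{1,+}$. Assembling all blocks gives the asserted decomposition, and the stated correspondence of lowest weight vectors with $M_\ell(\Gamma)$ is precisely the weight-$\ell$ count used throughout.
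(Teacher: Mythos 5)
Your proposal is correct and follows essentially the same route as the paper: both arguments use the weight bound coming from Lemma~\ref{holkneglemma} to restrict the possible irreducible subquotients to $L(0)$ and the $\mathcal{D}_{\ell-1,+}\cong L(\ell)$, use vanishing of self-extensions to get semisimplicity in every central-character block except $\chi_\varrho$, and isolate $V_{\Phi_2}\cong N(0)^\vee$ as the unique non-semisimple contribution to the $\chi_\varrho$ block via the uniqueness of the non-split extension of $L(2)$ by $L(0)$. The only differences are organizational: the paper first truncates to the finitely generated submodules $\AA_{\langle\leq k\rangle}(\Gamma)_{\mathfrak{n}\text{-fin}}$ so as to work squarely inside category $\mathcal{O}$ (your appeal to ``composition factors'' of the full, a priori infinite-length, module should likewise be routed through such truncations or through the finite-length cyclic submodules), and it excludes further $L(2)$'s on top of $N(0)^\vee$ via $\mathrm{Ext}_{\mathcal{O}}(L(2),N(0)^\vee)\cong\mathrm{Ext}_{\mathcal{O}}(N(0),L(2))=0$ rather than by normalizing the class in $\mathrm{Ext}^1(m\,\mathcal{D}_{1,+},\C)\cong\C^m$.
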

To prove this result, we will set up a certain algebraic apparatus. It turns out that the mechanism of \emph{category $\mathcal{O}$} is well suited toward our problem. In the $\SL_2$ case this mechanism could be replaced by more direct arguments, but we prefer to use category $\mathcal{O}$ because this method generalizes to the $\Sp_4$ case; see \cite{PSS14}. Our reference for category $\mathcal{O}$ will be \cite{Humphreys2008}.
\subsection*{Roots and weights}
Let $H$, $R$, $L$ be the elements of $\mathfrak{g}_\C$ defined in \eqref{HRLdefeq}. Then $\mathfrak{h}=\langle H\rangle$ is a Cartan subalgebra of $\mathfrak{g}_\C$. Let $\Phi\subset\mathfrak{h}^*$ be the root system of $\mathfrak{g}_\C$ corresponding to $\mathfrak{h}$. Then $R$ and $L$ span the corresponding root spaces. We identify an element $\lambda$ of $\mathfrak{h}^*$ with the element $\lambda(H)$ of $\C$. Then $\Phi=\{\pm2\}$. Let $E$ be the $\R$-span of $\Phi$. We endow $E$ with the inner product $(\cdot,\cdot)$ given by the usual multiplication of real numbers. Perhaps counterintuitively, we will declare $-2$ to be a positive (and simple) root, with corresponding root vector $L$, and $+2$ a negative root, with corresponding root vector $R$. The \emph{weight lattice} $\Lambda$ is defined as
\begin{equation}\label{weightlatticeeq}
 \Lambda=\Big\{\lambda\in E\:\Big|\:2\frac{(\lambda,\alpha)}{(\alpha,\alpha)}\in\Z\;\text{for all }\alpha\in\Phi\Big\}.
\end{equation}
Evidently, $\Lambda=\Z\subset E$. There is an ordering on $\Lambda$ defined as follows:
\begin{equation}\label{dominanceordereq}
 \mu\preccurlyeq\lambda\quad\Longleftrightarrow\quad\lambda\in\mu+\Gamma,
\end{equation}
where $\Gamma$ is the set of all $\Z_{\geq0}$-multiples of the positive root. Hence,
$$
 \mu\preccurlyeq\lambda\quad\Longleftrightarrow\quad\lambda\leq\mu\text{ and }\lambda\equiv\mu\bmod2.
$$
The \emph{fundamental weight} is $-1$, and the \emph{dominant integral weights} are the $\Z_{\geq0}$-multiples of the fundamental weight. We write $\Lambda^+$ for the set of dominant integral weights. Hence, $\Lambda^+=\{-1,-2,\ldots\}$. We write $\varrho=-1$ for half the sum of the positive roots. As before, let $\mathcal{Z}$ denote the center of the universal enveloping algebra $\mathcal{U}(\mathfrak{g}_\C)$. Via the Harish-Chandra isomorphism, all possible characters of $\mathcal{Z}$ are indexed by elements of $\mathfrak{h}^*$ modulo Weyl group action; see equation (8.32) in \cite{Knapp1986}. We denote by $\chi_{\lambda}$ the character of $\mathcal{Z}$ corresponding to $\lambda\in\mathfrak{h}^*$. Note that $\chi_{w\lambda}=\chi_{\lambda}$ for all $w\in W$, where the Weyl group acts by negation on $\mathfrak{h}^*\cong\C$.
\subsection*{Verma modules}
We recall the definition of the standard Verma modules. Let $\lambda$ be an integer, considered as an element of the weight lattice $\Lambda$. Let $\C_\lambda$ be the one-dimensional space on which $\mathfrak{h}=\langle H\rangle$ acts via $\lambda$. Let
$$
 \mathfrak{b}=\mathfrak{h}+\langle L\rangle
$$
be the Borel algebra defined by our positive system. We consider $\C_\lambda$ a $\mathfrak{b}$-module with the action of $L$ being trivial. Then the \emph{Verma module} corresponding to $\lambda$ is defined as
\begin{equation}\label{vermadefeq}
 N(\lambda)=\mathcal{U}(\mathfrak{g}_\C)\otimes_{\mathcal{U}(\mathfrak{b})}\C_\lambda.
\end{equation}
Clearly, $N(\lambda)$ contains the weight $\lambda$ with multiplicity one. Any non-zero vector $v$ in $N(\lambda)$ of weight $\lambda$ is called a \emph{highest weight vector}. It is well known that $N(\lambda)$ has the following properties:
\begin{enumerate}
 \item $N(\lambda)$ is a free module of rank $1$ over $\mathcal{U}(R)=\C[R]$.
 \item The set of weights of $N(\lambda)$ is $\lambda-\Gamma=\{\lambda,\lambda+2,\ldots\}$. Each weight occurs with multiplicity one.
 \item The module $N(\lambda)$ is a universal highest weight module for the weight $\lambda$, meaning it satisfies this universal property: Let $M$ be a $\mathfrak{g}_\C$-module which contains a vector $v$ with the following properties:
  \begin{itemize}
   \item $M=\mathcal{U}(\mathfrak{g}_\C)v$;
   \item $v$ has weight $\lambda$;
   \item $Lv=0$.
  \end{itemize}
  Then there exists a surjection $N(\lambda)\to M$ mapping a highest weight vector in $N(\lambda)$ to $v$.
 \item $N(\lambda)$ admits a unique irreducible submodule, and a unique irreducible quotient $L(\lambda)$. In particular, $N(\lambda)$ is indecomposable. See Theorem 1.2 of \cite{Humphreys2008}.
 \item $N(\lambda)$ has finite length. Each factor in a composition series is of the form $L(\mu)$ for some $\mu\leq\lambda$.
 \item $N(\lambda)$ admits the central character $\chi_{\lambda+\varrho}$, i.e., $\mathcal{Z}$ acts on $N(\lambda)$ via $\chi_{\lambda+\varrho}$. See Sects.\ 1.7--1.10 of \cite{Humphreys2008}. Note that Humphrey's $\chi_\lambda$ is our $\chi_{\lambda+\varrho}$.
 \item $L(\lambda)$ is finite-dimensional if and only if $\lambda\in\Lambda^+=\{0,-1,-2,\ldots\}$. See Theorem 1.6 of \cite{Humphreys2008}.
 \item $N(\lambda)$ is simple if and only if $\lambda>0$. See Theorem 4.4 of \cite{Humphreys2008}.
\end{enumerate}
Evidently, $L(0)$ is the trivial $\mathfrak{g}_\C$-module. It has central character $\chi_\varrho$.
\subsection*{Category \texorpdfstring{$\mathcal{O}$}{}}
We recall from Sect.~1.1 of \cite{Humphreys2008} the definition of category $\mathcal{O}$. This category is defined with respect to a choice of Cartan subalgebra $\mathfrak{h}$ and a choice of simple roots, and we make the choices specified above. Let $\mathfrak{n}$ be the space spanned by the positive root vectors, hence, in our case, $\mathfrak{n}=\langle L\rangle$. A $\mathfrak{g}_\C$-module $M$ is said to be in category $\mathcal{O}$ if it satisfies the following conditions:
\begin{enumerate}
 \item[($\mathcal{O}$1)] $M$ is a finitely generated $\mathcal{U}(\mathfrak{g}_\C)$-module.
 \item[($\mathcal{O}$2)] $M$ is the direct sum of its weight spaces, and all weights are integral.
 \item[($\mathcal{O}$3)] $M$ is locally $\mathfrak{n}$-finite. This means: For each $v\in M$ the subspace $\mathcal{U}(\mathfrak{n})v$ is finite-dimensional.
\end{enumerate}
Note that we are slightly varying the definition of category $\mathcal{O}$ by requiring that all weights are integral; the relevant results in \cite{Humphreys2008} still hold with this modification.

$\mathcal{O}$ is an abelian category. Evidently, $\mathcal{O}$ contains all Verma modules $N(\lambda)$ and their irreducible quotients $L(\lambda)$. The modules $M$ in $\mathcal{O}$ have many nice properties, as explained in the first sections of \cite{Humphreys2008}. For example:
\begin{itemize}
 \item $M$ has finite length, and admits a filtration
  \begin{equation}\label{categoryOeq1}
   0=V_0\subset V_1\subset\ldots\subset V_n\subset M,
  \end{equation}
   with $V_i/V_{i-1}\cong L(\lambda)$ for some $\lambda\in\mathfrak{h}^*$.
 \item $M$ can be written as a finite direct sum of indecomposable modules.
 \item If $M$ is an indecomposable module, then there exists a character $\chi$ of $\mathcal{Z}$ such that $M=N(\chi)$. Here,
  \begin{equation}\label{Mchidefeq}
   N(\chi)=\{v\in M\:|\:(z-\chi(z))^nv=0\text{ for some $n$ depending on $z$}\}.
  \end{equation}

\end{itemize}
For each $M$ in category $\mathcal{O}$ we may write
\begin{equation}\label{categoryOeq2}
 M=\bigoplus_\chi N(\chi),
\end{equation}
where $\chi$ runs over characters of $\mathcal{Z}$, and $N(\chi)$ is defined as in \eqref{Mchidefeq}; see Sect.~1.12 of \cite{Humphreys2008}. The \emph{modules} $N(\chi)$ may or may not be indecomposable.

Another feature of $\mathcal{O}$ is the existence of a duality functor $M\mapsto M^\vee$, as explained in Sect.~3.2 of \cite{Humphreys2008}. In general, $M^\vee$ is \emph{not} the contragredient of $M$, as $\mathcal{O}$ is not closed under taking contragredients. The duality functor in $\mathcal{O}$ has the following properties:
\begin{itemize}
 \item $M\mapsto M^\vee$ is exact and contravariant.
 \item $M^{\vee\vee}\cong M$.
 \item $(M_\chi)^\vee\cong(M^\vee)_\chi$ for a character $\chi$ of $\mathcal{Z}$.
 \item $L(\lambda)^\vee\cong L(\lambda)$.
 \item ${\rm Ext}_{\mathcal{O}}(M,N)\cong{\rm Ext}_{\mathcal{O}}(N^\vee,M^\vee)$. See Sect.~3.1 of \cite{Humphreys2008} for the definition of the ${\rm Ext}_{\mathcal{O}}$ groups.
\end{itemize}
Evidently, $L(0)$ is the trivial $\mathfrak{g}_\C$-module. It is easy to see that there is an exact sequence
\begin{equation}\label{M0exactsequence1eq}
 0\longrightarrow L(2)\longrightarrow N(0)\longrightarrow L(0)\longrightarrow0.
\end{equation}
Since $N(0)$ is indecomposable, this sequence does not split. Applying the duality functor, we get another non-split exact sequence
\begin{equation}\label{M0exactsequence2eq}
 0\longrightarrow L(0)\longrightarrow N(0)^\vee\longrightarrow L(2)\longrightarrow0.
\end{equation}
It is an exercise to show that the sequence \eqref{M0exactsequence1eq} is the only non-trivial extension of $L(0)$ by $L(2)$; similarly for \eqref{M0exactsequence2eq}. The fact that ${\rm Ext}_{\mathcal{O}}(L(0),L(2))=1$ can also be seen by applying the functor $\Hom_{\mathcal{O}}(\underline{\,\phantom{x}},L(2))$ to \eqref{M0exactsequence1eq} and considering the resulting long exact sequence.
\subsection*{Proof of Proposition \ref{AAnfindecompprop}}
Before starting the proof of Proposition \ref{AAnfindecompprop}, let us comment on the relationship between $(\mathfrak{g},K)$-modules and $\mathfrak{g}_\C$-modules. Clearly, every $(\mathfrak{g},K)$-module is also a $\mathfrak{g}_\C$-module. Conversely, let $(\pi,V)$ be a $\mathfrak{g}_\C$-module all of whose weights are integral, and such that $V$ is the direct sum of its weight spaces. If $v\in V$ has weight $k$, i.e., if $\pi(H)v=kv$, then we define a $K$-action on $\C v$ by $\pi(r(\theta))v=e^{ik\theta}v$. Since $V$ is the direct sum of its weight spaces, this defines a $K$-action on all of $V$. One can verify that, with this $K$-action, $V$ becomes a $(\mathfrak{g},K)$-module. In particular, every module in category $\mathcal{O}$ is naturally a $(\mathfrak{g},K)$-module. The upshot is that in the following arguments we do not have to worry about the distinction between $(\mathfrak{g},K)$-modules and $\mathfrak{g}_\C$-modules.

With these comments in mind, it is clear that we have the following isomorphisms of $(\mathfrak{g},K)$-modules:
\begin{itemize}
 \item $\mathcal{F}_p\cong L(-p+1)$ for $p\geq1$;
 \item $\mathcal{D}_{p,+}\cong L(p+1)$ for $p\geq0$;
 \item $V_{\Phi_2}\cong N(0)^\vee$.
\end{itemize}
The third isomorphism follows by comparing the exact sequences \eqref{E2exactseqeq} and \eqref{M0exactsequence2eq}, observing the uniqueness comment made after \eqref{M0exactsequence2eq}.

We are now ready to prove Proposition \ref{AAnfindecompprop}. As we saw, the modular form $E_2$ gives rise to the submodule $V_{\Phi_2}$ of $\AA(\Gamma)_{\mathfrak{n}\text{-fin}}$. Every non-zero $f\in M_\ell(\Gamma)$ gives rise to a copy of $\mathcal{D}_{\ell-1,+}$ inside $\AA(\Gamma)_{\mathfrak{n}\text{-fin}}$. It is therefore clear that
\begin{equation}\label{AAnfindecomppropeq1}
  \AA(\Gamma)_{\mathfrak{n}\text{-fin}}\supset V_{\Phi_2}\:\oplus\;\bigoplus_{\ell=1}^\infty n_\ell\mathcal{D}_{\ell-1,+},\qquad n_\ell=\dim M_\ell(\Gamma).
\end{equation}
To prove the converse, observe first that
\begin{itemize}
 \item $\AA(\Gamma)_{\mathfrak{n}\text{-fin}}$ contains no negative weights;
 \item $\AA(\Gamma)_{\mathfrak{n}\text{-fin}}$ contains the weight $0$ exactly once, the corresponding weight space consisting of the constant functions.
\end{itemize}
Both statements follow from Lemma \ref{holkneglemma}. We define
$$
 \AA_{\leq k}(\Gamma)_{\mathfrak{n}\text{-fin}}=\sum_{\ell=0}^k\AA_\ell(\Gamma)_{\mathfrak{n}\text{-fin}},
$$
and let $\AA_{\langle\leq k\rangle}(\Gamma)_{\mathfrak{n}\text{-fin}}$ be the $(\mathfrak{g},K)$-module generated by elements of $\AA_{\leq k}(\Gamma)_{\mathfrak{n}\text{-fin}}$. Evidently,
\begin{equation}\label{AAnfindecomppropeq2}
 \AA_{\langle\leq k\rangle}(\Gamma)_{\mathfrak{n}\text{-fin}}\supset V_{\Phi_2}\:\oplus\;\bigoplus_{\ell=1}^k n_\ell\mathcal{D}_{\ell-1,+},\qquad n_\ell=\dim M_\ell(\Gamma).
\end{equation}
To prove equality in \eqref{AAnfindecomppropeq1}, it is enough to prove equality in \eqref{AAnfindecomppropeq2}.

It follows from the finite-dimensionality of the spaces $M_\ell(\Gamma)$ that $\AA_{\leq k}(\Gamma)_{\mathfrak{n}\text{-fin}}$ is finite-dimensional. Hence $\AA_{\langle\leq k\rangle}(\Gamma)_{\mathfrak{n}\text{-fin}}$ is finitely generated. This proves that $\AA_{\langle\leq k\rangle}(\Gamma)_{\mathfrak{n}\text{-fin}}$ is in category $\mathcal{O}$. By properties of this category, we may write
$$
 \AA_{\langle\leq k\rangle}(\Gamma)_{\mathfrak{n}\text{-fin}}=V_1\oplus\ldots\oplus V_n
$$
with indecomposable submodules. Let
\begin{equation}\label{Vifiltrationeq}
 0=V_{i,0}\subset V_{i,1}\subset\ldots\subset V_{i,n_i}=V_i
\end{equation}
be a filtration for $V_i$ such that $V_{i,j}/V_{i,j-1}\cong L(\lambda_{i,j})$ for some $\lambda_{i,j}\in\Z$. Since there are no negative weights, we have $\lambda_{i,j}\geq0$ for all $i$ and $j$.

Let $\chi_i$ be the character of $\mathcal{Z}$ such that $V_i=V_i(\chi_i)$; see \eqref{Mchidefeq}. We think of $\chi_i$ as a non-negative integer. Assume that $\chi_i>1$ or $\chi_i=0$. Then $\lambda_{i,j}=\chi_i+1$ for all $j$, since, among the $L(\lambda)$ with $\lambda\geq0$, only $L(\chi_i+1)$ has central character $\chi_i$. Now ${\rm Ext}_{\mathcal{O}}(L(\lambda),L(\lambda))=0$ for all $\lambda$ by Proposition 3.1 d) of \cite{Humphreys2008}. It follows that $V_i$ is a direct sum of copies of $L(\lambda_i)$, where $\lambda_i:=\chi_i+1$. Since $V_i$ is indecomposable, we must have $V_i=L(\lambda_i)$.

Now consider a $V_i$ with $\chi_i=1$ (i.e., $\chi_i=\chi_\varrho$). The only $L(\lambda)$ with $\lambda\geq0$ and this central character are $L(0)$ and $L(2)$. If $L(0)$ does not occur in $V_i$, then the same argument as above applies, and we see that $V_i=L(2)$. Assume that $L(0)$ does occur in $V_i$. Since the weight $0$ occurs exactly once in the entire space, there is exactly one $V_i$ with this property, and this $V_i$ contains $L(0)$ exactly once. Since the weight $0$ space consists of the constant functions, it appears as a subrepresentation in $V_i$. Hence, we may assume it occurs at the bottom of the filtration, i.e., $V_{i,1}=L(0)$. If $V_i$ would not contain any $L(2)$ subquotients, then $\AA_{\langle\leq k\rangle}(\Gamma)_{\mathfrak{n}\text{-fin}}$ would be completely reducible, which we know is not the case. Hence there is at least one $L(2)$ subquotient sitting on top of the $L(0)$. By \eqref{M0exactsequence2eq} and the remark following it, $V_{i,2}\cong N(0)^\vee$. Now
$$
 {\rm Ext}_{\mathcal{O}}(L(2),N(0)^\vee)\cong{\rm Ext}_{\mathcal{O}}(N(0),L(2))=0
$$
by Proposition 1.3 b) of \cite{Humphreys2008}. This means that there can be no further $L(2)$'s on top of the $N(0)^\vee$, and it follows that $V_i=N(0)^\vee$.

To summarize, we proved that, abstractly,
$$
 \AA_{\langle\leq k\rangle}(\Gamma)_{\mathfrak{n}\text{-fin}}=N(0)^\vee\:\oplus\;\bigoplus_{\lambda=1}^\infty m_\lambda L(\lambda)
$$
with non-negative integers $m_\lambda$ almost all of which are zero. A moment's consideration shows that $m_\lambda=0$ for $\lambda>k$. Since $L(\lambda)\cong\mathcal{D}_{\lambda-1,+}$ and $N(0)^\vee\cong V_{\Phi_2}$, and since we know $\mathcal{D}_{\lambda-1,+}$ cannot occur more than $\dim M_\lambda(\Gamma)$ times, comparison with \eqref{AAnfindecomppropeq2} shows that we must have equality in \eqref{AAnfindecomppropeq2}. This concludes the proof of Proposition \ref{AAnfindecompprop}.\qed
\subsection*{The Structure Theorem for all modular forms}
We can now provide an alternative proof of Theorem 5.2 of \cite{Shimura1987}.
\begin{theorem}[Structure theorem for all modular forms]\label{GL2noncuspstructuretheorem}
 Fix non-nega\-tive integers $k,p$ and a congruence subgroup $\Gamma$ of $\SL_2(\Z)$. Then:
 \begin{enumerate}
  \item $N_0^p(\Gamma)=\C$.
  \item If $k$ is even and $2\leq k<2+2p$, then
   \begin{equation}\label{GL2noncuspstructuretheoremeq1}
    N_k^p(\Gamma)=R^{(k-2)/2}(\C E_2)\;\oplus\bigoplus_{\substack{\ell\ge 1\\\ell\equiv k\bmod{2}\\k-2p\le\ell\le k}} R^{(k-\ell)/2}\left(M_\ell(\Gamma)\right).
   \end{equation}
  \item If $k$ is odd, or if $k$ is even and $k\geq2+2p$, then
   \begin{equation}\label{GL2noncuspstructuretheoremeq2}
    N_k^p(\Gamma)=\bigoplus_{\substack{\ell\ge 1\\\ell\equiv k\bmod{2}\\k-2p\le\ell\le k}} R^{(k-\ell)/2}\left(M_\ell(\Gamma)\right).
   \end{equation}
 \end{enumerate}
\end{theorem}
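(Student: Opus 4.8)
The plan is to run the argument of Theorem \ref{GL2cuspstructuretheorem} essentially verbatim, but with Proposition \ref{AAnfindecompprop} in place of Proposition \ref{AA0nfindecompprop}; the one genuinely new ingredient is the indecomposable summand $V_{\Phi_2}$. Given $f\in N_k^p(\Gamma)$, I would form the automorphic form $\Phi(g)=(f|_kg)(i)$, which lies in $\AA_k(\Gamma)_{\mathfrak{n}\text{-fin}}$ by Lemma \ref{Npkautformlemma}, and project it onto the summands of the decomposition in Proposition \ref{AAnfindecompprop}. Since the only weight-$0$ vectors in $\AA(\Gamma)_{\mathfrak{n}\text{-fin}}$ are the constants (they sit at the bottom of $V_{\Phi_2}$), the case $k=0$ forces $\Phi$, and hence $f$, to be constant, giving part (1). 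From now on $k\geq1$, and I write $\Phi=\Phi_0+\sum_i\Phi_i$, where $\Phi_0$ is the weight-$k$ component of $V_{\Phi_2}$ (possibly zero) and each $\Phi_i$ is the weight-$k$ vector of a summand $V_i\cong\mathcal{D}_{\ell_i-1,+}$.

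The discrete-series summands are treated exactly as in the cuspidal proof: the presence of a weight-$k$ vector forces $\ell_i\leq k$ and $\ell_i\equiv k\bmod2$, the lowest weight vector corresponds under $\Phi\mapsto\tilde\Phi$ to an element of $M_{\ell_i}(\Gamma)$ (now using the $M_\ell$-version of the decomposition), and $\Phi_i$ is a nonzero multiple of $R^{(k-\ell_i)/2}$ of it, which translates via Lemma \ref{diffopHlemma} into an element of $R^{(k-\ell_i)/2}\!\left(M_{\ell_i}(\Gamma)\right)$. For the component $\Phi_0$, I would use that $V_{\Phi_2}$ has one-dimensional weight spaces at $0,2,4,\dots$, that $R^{(k-2)/2}\Phi_2$ is a nonzero weight-$k$ vector (the quotient $\mathcal{D}_{1,+}$ has $R$ acting injectively), and that $\tilde\Phi_2=E_2$; hence any nonzero $\Phi_0$ corresponds to an element of $R^{(k-2)/2}(\C E_2)$.

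The decisive step is extracting the depth constraints from $\mathfrak{n}$-finiteness. By part (2) of Lemma \ref{RkLkNplemma} we have $L^{p+1}f=0$, so $L^{p+1}\Phi=0$ and therefore $L^{p+1}$ annihilates each component. In $\mathcal{D}_{\ell_i-1,+}$ the weight-$k$ vector is killed by $L^{p+1}$ precisely when $(k-\ell_i)/2\leq p$, producing the lower bound $\ell_i\geq k-2p$. The subtle point, and the one I expect to be the main obstacle, is the analysis of $\Phi_0$: because the sequence \eqref{E2exactseqeq} does not split, applying $L$ repeatedly to the weight-$k$ vector of $V_{\Phi_2}$ descends through the weights $k,k-2,\dots,2$ and then lands on the \emph{nonzero} constant at weight $0$ (indeed $L$ is injective on the weight spaces of $V_{\Phi_2}$ above weight $0$, and $L\Phi_2$ is a nonzero constant), being annihilated only one step later. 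Thus $L^{p+1}\Phi_0=0$ forces $k/2\leq p$, i.e.\ $k<2+2p$. This is exactly the dichotomy separating cases (2) and (3); it has no counterpart in the semisimple cuspidal setting, and tracking this non-split extension correctly is where the real work lies.

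To finish I would check the reverse inclusion and directness. Iterating part (1) of Lemma \ref{RkLkNplemma} (each application of $R$ raises the depth by one) shows $R^{(k-\ell)/2}\!\left(M_\ell(\Gamma)\right)\subset N_k^{(k-\ell)/2}(\Gamma)\subset N_k^p(\Gamma)$ whenever $\ell\geq k-2p$, and likewise $R^{(k-2)/2}(\C E_2)\subset N_k^{k/2}(\Gamma)\subset N_k^p(\Gamma)$ whenever $k/2\leq p$, using $E_2\in N_2^1(\Gamma)$. The sum is direct because it is the image, under the injective weight-preserving map $f\mapsto\Phi$, of the module direct-sum decomposition of Proposition \ref{AAnfindecompprop}, whose summands have mutually disjoint weight-$k$ lines.
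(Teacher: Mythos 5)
Your proposal is correct and follows essentially the same route as the paper: the paper's proof simply says to repeat the argument of Theorem \ref{GL2cuspstructuretheorem} with Proposition \ref{AAnfindecompprop} in place of Proposition \ref{AA0nfindecompprop}, and its one added observation --- that $R^{(k-2)/2}E_2$ lies in $N_k^{k/2}(\Gamma)$ but not in $N_k^{k/2-1}(\Gamma)$, so the $E_2$ term appears exactly when $k<2+2p$ --- is precisely the fact you derive by tracking $L$ through the non-split extension $V_{\Phi_2}$. Your write-up merely fills in the details of that ``analogous'' argument (correctly), so there is nothing further to add.
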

\begin{proof}
The proof is analogous to that of Theorem \ref{GL2cuspstructuretheorem}. Instead of Proposition \ref{AA0nfindecompprop}, we use Proposition \ref{AAnfindecompprop}. Observe that $R^{(k-2)/2}E_2$ is in $N_k^{k/2}(\Gamma)$, but not in $N_k^{k/2-1}(\Gamma)$, so in order for $E_2$ to contribute to $N_k^p(\Gamma)$ (for $k$ even) we must have $\frac k2\leq p$, or equivalently, $k<2+2p$.
\end{proof}

A simplified version of the Structure Theorem for all modular forms would be this: If $p<\frac{k-2}2$, then
\begin{equation}\label{GL2noncuspstructuretheoremeq3}
    N_k^p(\Gamma)=\bigoplus_{\substack{\ell\ge 1\\\ell\equiv k\bmod{2}\\k-2p\le\ell\le k}} R^{(k-\ell)/2}\left(M_\ell(\Gamma)\right).
\end{equation}
The hypothesis $p<\frac{k-2}2$ implies that, in the arguments in the proof of the theorem, we never ``reach down'' to weight $2$. Hence, the component $V_{\Phi_2}$ appearing in Proposition \ref{AAnfindecompprop} can be ignored.

\begin{corollary}[Structure theorem for non-cusp forms]\label{gl2eisensteinstrucutretheorem} Fix non-nega\-tive integers $k,p$ and a congruence subgroup $\Gamma$ of $\SL_2(\Z)$. Then:
 \begin{enumerate}
  \item $\E_0^p(\Gamma)=\C$.
  \item If $k$ is even and $2\leq k<2+2p$, then
   \begin{equation}\label{GL2eisstructuretheoremeq1}
    \E_k^p(\Gamma)=R^{(k-2)/2}(\C E_2)\;\oplus\bigoplus_{\substack{\ell\ge 1\\\ell\equiv k\bmod{2}\\k-2p\le\ell\le k}} R^{(k-\ell)/2}\left(E_\ell(\Gamma)\right).
   \end{equation}
  \item If $k$ is odd, or if $k$ is even and $k\geq2+2p$, then
   \begin{equation}\label{GL2eisstructuretheoremeq2}
    \E_k^p(\Gamma)=\bigoplus_{\substack{\ell\ge 1\\\ell\equiv k\bmod{2}\\k-2p\le\ell\le k}} R^{(k-\ell)/2}\left(E_\ell(\Gamma)\right).
   \end{equation}
 \end{enumerate}

\end{corollary}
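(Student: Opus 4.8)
The plan is to mirror the proof of Theorem~\ref{GL2cuspstructuretheorem}, feeding in the decomposition of Theorem~\ref{GL2noncuspstructuretheorem} (in place of Proposition~\ref{AA0nfindecompprop}) and then peeling off the cuspidal part using Theorem~\ref{GL2cuspstructuretheorem} itself. Part (1) is immediate: by Theorem~\ref{GL2noncuspstructuretheorem}(1) we have $N_0^p(\Gamma)=\C$, while $N_0^p(\Gamma)^\circ=0$ by Theorem~\ref{GL2cuspstructuretheorem}; since $\E_0^p(\Gamma)=\E_0(\Gamma)\cap N_0^p(\Gamma)$ and every element of $N_0^p(\Gamma)$ is trivially orthogonal to the zero space $N_0(\Gamma)^\circ$, we get $\E_0^p(\Gamma)=N_0^p(\Gamma)=\C$. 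For $k\ge1$ I would begin from the classical orthogonal splitting $M_\ell(\Gamma)=S_\ell(\Gamma)\oplus E_\ell(\Gamma)$.

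Applying the injective operator $R^{(k-\ell)/2}$ to this splitting and using Lemma~\ref{preserveeis}, each holomorphic summand appearing in Theorem~\ref{GL2noncuspstructuretheorem} breaks up as
\[
 R^{(k-\ell)/2}\big(M_\ell(\Gamma)\big)=R^{(k-\ell)/2}\big(S_\ell(\Gamma)\big)\;\oplus\;R^{(k-\ell)/2}\big(E_\ell(\Gamma)\big),
\]
with the first factor landing in $N_k^p(\Gamma)^\circ$ and the second in $\E_k^p(\Gamma)$. Regrouping the decomposition of Theorem~\ref{GL2noncuspstructuretheorem}, I would write $N_k^p(\Gamma)=B\oplus\big(\bigoplus_\ell R^{(k-\ell)/2}(S_\ell(\Gamma))\big)$, where $B$ denotes the right-hand side of \eqref{GL2eisstructuretheoremeq1} or \eqref{GL2eisstructuretheoremeq2} (the $R^{(k-2)/2}(\C E_2)$ term being present precisely in case (2), by the ``reaching down to weight~$2$'' analysis already used in Theorem~\ref{GL2noncuspstructuretheorem}). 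By Theorem~\ref{GL2cuspstructuretheorem} the second summand is exactly $N_k^p(\Gamma)^\circ$, so the whole content of the corollary is the identification $B=\E_k^p(\Gamma)$.

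The inclusion $B\subseteq\E_k^p(\Gamma)$ is clear for the Eisenstein factors $R^{(k-\ell)/2}(E_\ell(\Gamma))$ by Lemma~\ref{preserveeis}; the real issue, and the main obstacle, is the $E_2$-term, i.e.\ that $R^{(k-2)/2}(E_2)$ is orthogonal to all of $N_k(\Gamma)^\circ$. Moving the raising and lowering operators across the $\mathfrak{g}$-invariant pairing exactly as in the proof of Lemma~\ref{preserveeis} (using $\overline{R}=L$), together with the observation that $\Omega$ separates constituents of distinct Casimir eigenvalue, I would reduce this to the single base case $\langle E_2,g\rangle=0$ for $g\in S_2(\Gamma)$; note that $N_2(\Gamma)^\circ=S_2(\Gamma)$, so this is precisely the assertion $E_2\in\E_2(\Gamma)$. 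This orthogonality is not formal: since $V_{\Phi_2}\cong N(0)^\vee$ has $\mathcal{D}_{1,+}\cong L(2)$ as a quotient carrying the same central character $\chi_\varrho$ as the weight-$2$ cuspidal constituents, the abstract decomposition of Proposition~\ref{AAnfindecompprop} cannot by itself separate $\Phi_2$ from the cusp forms. Rather, one must invoke the classical analytic fact that the completed weight-$2$ Eisenstein series is orthogonal to cusp forms (proved by unfolding and the vanishing of the constant term of $g$, in Hecke-regularized form).

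Finally, having shown $B\subseteq\E_k^p(\Gamma)$, I would conclude equality by dimension. The orthogonal decomposition $N_k^p(\Gamma)=N_k^p(\Gamma)^\circ\oplus\E_k^p(\Gamma)$ of Corollary~\ref{Nkporthogonalcorollary} gives $\dim\E_k^p(\Gamma)=\dim N_k^p(\Gamma)-\dim N_k^p(\Gamma)^\circ$, while the regrouping $N_k^p(\Gamma)=B\oplus N_k^p(\Gamma)^\circ$ above yields the same value $\dim B$. Hence $B=\E_k^p(\Gamma)$, which is (2) and (3); the displayed sums are genuine direct sums since they are already direct inside $N_k^p(\Gamma)$ by Theorem~\ref{GL2noncuspstructuretheorem}.
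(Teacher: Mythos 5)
Your argument is essentially the paper's: the containment of the right-hand side in $\E_k^p(\Gamma)$ comes from Lemma~\ref{preserveeis} together with the orthogonality of $\Phi_2$ to cusp forms, and the reverse containment from Theorem~\ref{GL2noncuspstructuretheorem} plus the orthogonal splitting $M_\ell(\Gamma)=S_\ell(\Gamma)\oplus E_\ell(\Gamma)$ and the compatibility of $R^v$ with inner products. Two remarks. First, you are right to isolate $\langle E_2,g\rangle=0$ for $g\in S_2(\Gamma)$ as the one non-formal input: the paper simply invokes ``the fact that $\Phi_2$ lies in the orthogonal complement of the cusp forms'' without proof, and since $V_{\Phi_2}\cong N(0)^\vee$ shares the central character $\chi_\varrho$ (and Casimir eigenvalue $0$) with the weight-$2$ cuspidal constituents, neither Proposition~\ref{AAnfindecompprop} nor any adjointness manipulation can supply it; your appeal to the classical unfolding/regularization argument is the honest way to fill this in. Second, your concluding dimension count cites Corollary~\ref{Nkporthogonalcorollary}, which in the paper is deduced \emph{from} the present corollary (its proof reduces to $p=0$ ``by the structure theorems''), so as written that step is circular. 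The repair is immediate: all the dimension count needs is $\E_k^p(\Gamma)\cap N_k^p(\Gamma)^\circ=0$, which follows from the positive-definiteness of the Petersson product on cusp forms; this gives $\dim\E_k^p(\Gamma)\le\dim N_k^p(\Gamma)-\dim N_k^p(\Gamma)^\circ=\dim B$, and combined with $B\subseteq\E_k^p(\Gamma)$ yields $B=\E_k^p(\Gamma)$ without invoking the later corollary.
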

\begin{proof}That the right side of each equation is contained in the left side is immediate from Lemma~\ref{preserveeis} and the fact that $\Phi_2$ (the automorphic form corresponding to $E_2$) lies in the orthogonal complement of the cusp forms. That the left side is contained in the right side follows from Theorem~\ref{GL2noncuspstructuretheorem}, the fact that $M_\ell(\Gamma)$ is the orthogonal sum of $S_\ell(\Gamma)$ and $E_\ell(\Gamma)$, and the fact that the $R^v$ maps preserve inner products up to a constant.
\end{proof}

\begin{corollary}\label{Nkporthogonalcorollary}
 Let $k$ and $p$ be non-negative integers. The space $N_k^p(\Gamma)$ is the orthogonal direct sum of $N_k^p(\Gamma)^\circ$ and $\E_k^p(\Gamma)$.
\end{corollary}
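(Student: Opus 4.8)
The plan is to deduce the corollary from the three structure theorems already in hand --- Theorem~\ref{GL2noncuspstructuretheorem} for $N_k^p(\Gamma)$, Theorem~\ref{GL2cuspstructuretheorem} for $N_k^p(\Gamma)^\circ$, and Corollary~\ref{gl2eisensteinstrucutretheorem} for $\E_k^p(\Gamma)$ --- together with the classical orthogonal splitting $M_\ell(\Gamma)=S_\ell(\Gamma)\oplus E_\ell(\Gamma)$. The orthogonality half is immediate: by construction $\E_k^p(\Gamma)=\E_k(\Gamma)\cap N_k^p(\Gamma)$ with $\E_k(\Gamma)$ the orthogonal complement of $N_k(\Gamma)^\circ$ in $N_k(\Gamma)$, and since $N_k^p(\Gamma)^\circ\subseteq N_k(\Gamma)^\circ$, every element of $\E_k^p(\Gamma)$ is orthogonal to every element of $N_k^p(\Gamma)^\circ$. (The Petersson pairing is defined here because one argument is a cusp form.)

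For the spanning, I would start from the right-hand side of Theorem~\ref{GL2noncuspstructuretheorem} and split each summand $R^{(k-\ell)/2}(M_\ell(\Gamma))$ via $M_\ell(\Gamma)=S_\ell(\Gamma)\oplus E_\ell(\Gamma)$, which yields
\[
 N_k^p(\Gamma)=\Big[R^{(k-2)/2}(\C E_2)\Big]\oplus\bigoplus_\ell R^{(k-\ell)/2}\big(S_\ell(\Gamma)\big)\oplus\bigoplus_\ell R^{(k-\ell)/2}\big(E_\ell(\Gamma)\big),
\]
where the bracketed term occurs exactly when $k$ is even and $2\le k<2+2p$. By \eqref{GL2cuspstructuretheoremeq1} the middle sum equals $N_k^p(\Gamma)^\circ$, while by Corollary~\ref{gl2eisensteinstrucutretheorem} the bracketed term together with the last sum equals $\E_k^p(\Gamma)$. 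Regrouping gives $N_k^p(\Gamma)=N_k^p(\Gamma)^\circ+\E_k^p(\Gamma)$, and the sum is direct because the three groups in the display are; alternatively, any $f$ in the intersection would be a cusp form orthogonal to itself, forcing $f=0$ by positive-definiteness of the Petersson product on cusp forms. The degenerate weight $k=0$ is treated directly: $N_0^p(\Gamma)=\C$, $N_0^p(\Gamma)^\circ=0$, and $\E_0^p(\Gamma)=\C$, so the identity is trivial.

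There is no genuine obstacle once the three theorems are available; the only thing to watch is bookkeeping. I must confirm that the index ranges ($\ell\equiv k\bmod2$ and $k-2p\le\ell\le k$) and the near-holomorphy degree $p$ agree verbatim across the three cited results, so that the regrouping is an equality of identically-indexed sums, and that the $E_2$-contribution lands on the Eisenstein side with the correct degree --- precisely the observation in the proof of Theorem~\ref{GL2noncuspstructuretheorem} that $R^{(k-2)/2}E_2\in N_k^{k/2}(\Gamma)\setminus N_k^{k/2-1}(\Gamma)$. With the indices aligned the corollary is a formal consequence.
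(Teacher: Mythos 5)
Your reduction of the general case to $p=0$ via the three structure theorems is essentially the paper's own first step (its proof opens with ``By the structure theorems, it is enough to prove the assertion for $p=0$''), and your index bookkeeping, the definitional orthogonality of $\E_k^p(\Gamma)$ against $N_k^p(\Gamma)^\circ$, and the $k=0$ case are all fine. The gap is that you then declare the $p=0$ statement --- $M_\ell(\Gamma)=S_\ell(\Gamma)\oplus E_\ell(\Gamma)$ --- to be ``the classical orthogonal splitting'' and take it as an input. With this paper's definitions it is not classical: $E_\ell(\Gamma)=\E_\ell^0(\Gamma)=\E_\ell(\Gamma)\cap M_\ell(\Gamma)$, where $\E_\ell(\Gamma)$ is the orthogonal complement of \emph{all} nearly holomorphic cusp forms $N_\ell(\Gamma)^\circ$, not merely of $S_\ell(\Gamma)$. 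The classical fact gives only $M_\ell(\Gamma)=S_\ell(\Gamma)\oplus S_\ell(\Gamma)^\perp$ with the complement taken relative to $S_\ell(\Gamma)$ inside $M_\ell(\Gamma)$; what remains to be shown is $S_\ell(\Gamma)^\perp\subseteq\E_\ell(\Gamma)$, i.e., that a holomorphic form orthogonal to every holomorphic cusp form of weight $\ell$ is automatically orthogonal to $R^v\left(S_{\ell-2v}(\Gamma)\right)$ for every $v\ge1$. This is the substantive content of the corollary, and it is exactly what the rest of the paper's proof supplies: one passes to $\Phi_f$ on $\SL_2(\R)$, decomposes an arbitrary nearly holomorphic cusp form into lowest-weight modules $\mathcal{D}_{\ell-1,+}$ using Proposition~\ref{AA0nfindecompprop}, and applies the adjunction $\langle\Phi,R\Psi'\rangle+\langle L\Phi,\Psi'\rangle=0$ together with $L\Phi_f=0$ for holomorphic $f$. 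Note also that the same unproved fact is already cited inside the proof of Corollary~\ref{gl2eisensteinstrucutretheorem}, which you invoke, so that corollary cannot be used to supply it without circularity; the $p=0$ case has to be established independently, as the paper does.
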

\begin{proof}
By the structure theorems, it is enough to prove the assertion for $p=0$. In this case the claim is that $M_k(\Gamma)$ is the orthogonal direct sum of $S_k(\Gamma)$ and $E_k(\Gamma)$. Clearly,
$$
 M_k(\Gamma)=S_k(\Gamma)\oplus S_k(\Gamma)^\perp\qquad\text{and}\qquad E_k(\Gamma)\subset S_k(\Gamma)^\perp.
$$
Hence, our task is to show that a non-zero element $f$ of $S_k(\Gamma)^\perp$ is orthogonal to all of $N_k(\Gamma)^\circ$. Let $\Phi_f$ be the function on $\SL_2(\R)$ corresponding to $f$. We will in fact show that $\Phi_f$ is orthogonal to any cusp form $\Psi$. We may assume that $\Psi$ generates an irreducible representation $\mathcal{D}_{\ell-1,+}$. Assume first that $\Psi$ has weight $\ell$, i.e., $\Psi$ is the lowest weight vector in $\mathcal{D}_{\ell-1,+}$. If $\ell\neq k$, then $\langle\Phi,\Psi\rangle=0$ since the weights do not match. If $\ell=k$, then $\langle\Phi,\Psi\rangle=0$ since $\Psi$ corresponds to an element of $S_k(\Gamma)$. Now assume that $\Psi$ has weight greater than $\ell$. Then $\Psi=R\Psi'$ for some $\Psi'\in\mathcal{D}_{\ell-1,+}$, and the general formula
$$
 \langle\Phi,R\Psi'\rangle+\langle L\Phi,\Psi'\rangle=0
$$
shows that $\langle\Phi,R\Psi'\rangle=0$, because $\Phi$ is a lowest weight vector. This concludes the proof.
\end{proof}
\begin{remark}
It is well-known that $E_k(\Gamma)=S_k(\Gamma)^\perp$ is spanned by the various weight $k$ holomorphic Eisenstein series on $\Gamma$.
\end{remark}

\bibliography{pullback}{}

\end{document}